\newtheorem{theorem}{Theorem}[section]
\newtheorem{lemma}[theorem]{Lemma}
\theoremstyle{definition}
\theoremstyle{remark}
\newtheorem{remark}[theorem]{Remark}
\numberwithin{equation}{section}
\newcommand{\loc}{\operatorname{loc}}
\newcommand{\ecomp}{C_{c}^{\infty}(E)}
\newcommand{\lloc}{L_{\loc}}
\newcommand{\<}{\langle}
\renewcommand{\>}{\rangle}
\newcommand{\Dom}{\operatorname{Dom}}
\newcommand{\End}{\operatorname{End}}
\newcommand{\RE}{\operatorname{Re}}
\newcommand\RR{\mathbb{R}}
\newcommand{\mcomp}{C_{c}^{\infty}(M)}
\newcommand{\Del}{\Delta}
\newcommand{\n}{\nabla}
\newcommand{\sign}{\operatorname{sign}}
\begin{document}

\title[Schr\"odinger operators on Riemannian manifolds]{On a positivity preservation property for Schr\"odinger operators on Riemannian manifolds}

\author{Ognjen Milatovic}
\address{Department of Mathematics and Statistics\\
           University of North Florida   \\
           Jacksonville, FL 32224 \\
           USA
            }
\email{omilatov@unf.edu}

\subjclass[2010]{Primary 47B25,58J50; Secondary 35P05, 60H30}

\keywords{non-negative Ricci curvature, positivity preservation, Riemannian manifold, Schr\"odinger operator,
self-adjoint, singular potential}

\begin{abstract}
We study a positivity preservation property for Schr\"odinger operators with singular potential on geodesically complete Riemannian manifolds with non-negative Ricci curvature. We apply this property to the question of self-adjointness of the maximal realization of the corresponding operator.
\end{abstract}

\maketitle

\section{Introduction}\label{S:intro}
In his landmark paper~\cite{Kato72}, Kato proved a powerful distributional inequality, today known as Kato's inequality, which has since found numerous applications in self-adjointness (and $m$-accretivity) problems in $L^2(\mathbb{R}^n)$ for Schr\"odinger operators with a singular potential. For example, consider the operator $-\Delta +q$ with $q^{+}\in\lloc^2(\mathbb{R}^n)$ and $q^{-}\in L^{\infty}(\RR^n)+L^{n/2}(\RR^n)$, where $n\geq 5$, $q^{+}:=\max(q,0)$ and $q^{-}:=\max(-q,0)$.  Under these conditions, the operator $-\Delta +q$ is semi-bounded from below on $C_{c}^{\infty}(\mathbb{R}^n)$; see Lemma 2.1 in~\cite{Brezis-Kato79}. By an abstract fact (see Theorem X.26 in~\cite{rs}), to prove the essential self-adjointness of $-\Delta +q$ on $C_{c}^{\infty}(\mathbb{R}^n)$, it is enough to show that for any $v\in L^2(\mathbb{R}^n)$ such that $(-\Delta +q+\lambda)v=0$ in distributional sense, where $\lambda$ is a sufficiently large constant, we have $v=0$. To this end, we apply Kato's inequality to $v\in L^2(\mathbb{R}^n)$ satisfying $(-\Delta +q+\lambda)v=0$. This leads to the distributional inequality
\[
-\Delta|v|+\lambda|v|-q^{-}|v|\leq 0.
\]
The equality $v=0$ will follow if $q^{-}$ satisfies the positivity preservation property described below.

\medskip

\noindent\textbf{Positivity Preservation Property (PPP).} Let $F\in\lloc^{1}(\mathbb{R}^n)$ be a non-negative function. Then, there exists $\lambda_0\geq 0$ so that that if $\lambda>\lambda_0$, $u\in L^2(\mathbb{R}^n)$, $Fu\in \lloc^{1}(\mathbb{R}^n)$, and
\[
-\Delta u+\lambda u-Fu\geq 0,\qquad \textrm{in distributional sense},
\]
then $u\geq 0$.

\medskip
Br\'ezis and Kato showed in ~\cite{Brezis-Kato79} that (PPP) holds for (non-negative) functions
$F\in L^{\infty}(\RR^n)+L^{p}(\RR^n)$ with $p=\frac{n}{2}$ for $n\geq 3$, $p>1$ for $n=2$, and $p=1$ for $n=1$, together with the assumption $F\in \lloc^{n/2+\epsilon}(\RR^n)$, $\epsilon>0$,  in dimensions $n=3$ and $n=4$. The proof of (PPP) in~\cite{Brezis-Kato79} was based on elliptic equation theory and Sobolev space techniques.

Subsequently, using stochastic analysis techniques, Devinatz~\cite{D-80} showed that (PPP) holds for (non-negative) functions $F\in\lloc^{1}(\mathbb{R}^n)$ satisfying the property
\begin{equation}\label{E:devin}
\lim_{\alpha\to \infty}\left(\sup_{x\in\mathbb{R}^n}\frac{1}{4\pi^{n/2}}\int_{\mathbb{R}^n}\frac{F(x-y)}{|y|^{n-2}}
\left(\int_{\alpha y^2}^{\infty}\tau^{n/2-2}e^{-\tau}\,d\tau\right)\,dy\right)\,<\,1.
\end{equation}
We should note that the results of~\cite{D-80} include those of Jensen~\cite{J-78}. As an application of (PPP), the papers~
\cite{Brezis-Kato79, D-80, J-78} studied the self-adjointness problem of the corresponding Schr\"odinger operator.

In the context of a Riemannian manifold $M$, a simpler variant of (PPP) with $F\equiv 0$, which we label as (PPP-0), was considered in Proposition B.3 of~\cite{bms}, where it was shown that (PPP-0) holds under $C^{\infty}$-bounded geometry
assumption on $M$, that is,  $M$
has a positive injectivity radius and all Levi--Civita derivatives of the
curvature tensor of $M$ are bounded. The main point here is that the corresponding proof of~\cite{bms} depends on the existence of a sequence of smooth compactly supported functions $\chi_{k}$ with the following properties:

\begin{enumerate}

\item [(C1)] $0\leq \chi_{k}(x)\leq 1$, $x\in M$, $k=1,2,\dots$;

\item [(C2)] for every compact set $K\subset M$, there exists $k_{0}$ such
that $\chi_{k}=1$ on $K$, for $k\geq k_{0}$;

\item [(C3)]  $\sup_{x\in M}|d\chi_{k}(x)|\to 0$ as
$k\to\infty$.

\item [(C4)]  $\sup_{x\in M}|\Delta\chi_{k}(x)|\to 0$ as
$k\to\infty$.

\end{enumerate}

While the existence of a sequence $\chi_{k}$ satisfying (C1), (C2), and (C3) on an arbitrary geodesically complete Riemannian manifold is well known (see~\cite{ka}), a sequence satisfying all four properties has not yet been constructed (to our knowledge) in such a general context.

Very recently, G\"uneysu~\cite{Guneysu-2014} has improved (PPP-0) result considerably.  In particular, in the context of a geodesically complete Riemannian manifold with non-negative Ricci curvature, the author of~\cite{Guneysu-2014} has constructed a sequence $\chi_k$ satisfying (C1)--(C4) and proved (PPP-0). We should also note that the paper~\cite{Guneysu-2014} contains, among other things, a study of (PPP-0) in the setting of $L^{p}$ spaces with $p\in[1,\infty]$.

Let us point out that under $C^{\infty}$-bounded geometry
assumptions on $M$, an earlier study~\cite{Milatovic-12} showed that (PPP) holds for (non-negative) functions $F$ belonging to the Kato class (see Section~\ref{SS:Dyn-Kat} below) and satisfying the following additional assumption: $F\in L_{\loc}^p(M)$ with $p=n/2+\epsilon$, with some arbitrarily small $\epsilon>0$, for the case $2\leq n\leq 4$; $p=n/2$ for the case $n\geq 5$. We note that the paper~\cite{ Milatovic-12} used the latter assumption for elliptic equation and Sobolev space arguments. Based on recent developments in path-integral representations for semi-groups of Schr\"odinger operators with singular potential on Riemannian manifolds and the construction of cut-off functions satisfying (C1)--(C4) above, as seen in G\"uneysu's works~\cite{guneysu-2010, Guneysu-2012, Guneysu-2011, Guneysu-2014}, we will study (PPP) for a class functions $F$ that shares some properties with~(\ref{E:devin}) and includes, in particular, Kato class. In this regard, within the class of non-negative Ricci curvature, our results include those in~\cite{Milatovic-12}. In particular, we eliminate the assumption $F\in L_{\loc}^p(M)$ with $p$ as described above. Finally, as an application of the corresponding (PPP), we give sufficient conditions for the self-adjointness of the ``maximal" realization of the Schr\"odinger operator with electric potential whose negative part satisfies the same assumptions as $F$ in (PPP).

For reviews of results concerning the question of self-sdjointness of Schr\"odinger operators in $L^2(\RR^n)$ and $L^2(M)$, see, for instance,~\cite{ckfs} and~\cite{bms}.  For more recent studies, see the papers~\cite{bandara, bgp, GK, Guneysu-2014, GP}.

Finally, we remark that it might be possible to obtain a variant of (PPP) for perturbations of Dirichlet forms by measures. For the background on Dirichlet forms and their perturbations by measures, see, for instance, the book~\cite{Fuk}, papers~\cite{Kuwae-06, Kuwae-07, Stv96}, and references therein.

\section{Results}
\subsection{Notations}\label{SS:s-2-1} Let $M$ be a connected smooth Riemannian $n$-manifold without boundary.
Throughout the paper, by $\Delta$ we denote the corresponding \emph{negative} Laplace--Beltrami operator on $M$,  by $d\mu$ the volume measure of $M$, by $C^{\infty}(M)$ the space of complex-valued smooth functions on $M$, by $\mcomp$ the space of complex-valued smooth
compactly supported functions on $M$, by $\Omega^{1}(M)$ the space of
smooth 1-forms on $M$, by $L^2(M)$ the space of square integrable complex-valued functions on $M$, and by $(\cdot,\cdot)$ the usual inner product on $L^2(M)$. Additionally, $p(t,x,y)$ denotes the heat kernel of $M$ as in Theorem 7.13 in~\cite{Grigoryan-11}. We should emphasize that in this paper $p(t,x,y)$ corresponds to $e^{-t(-\Delta/2)}$, $t\geq 0$, instead of $e^{-t(-\Delta)}$.


\subsection{Positivity Preservation Property}
We are ready to formulate sufficient conditions for the positivity preservation property introduced in Section~\ref{S:intro}.

\begin{theorem}\label{T:main-1} Assume that $M$ is a geodesically complete connected Riemannian manifold with non-negative Ricci curvature. Let $F\colon M \to [0,\infty)$ be a measurable function satisfying the following property: there exists $t_0>0$ such that
\begin{equation}\label{E:assumption-star}
\sup_{x\in M}\left(\int_{0}^{t_0}\int_{M}p(s,x,y)F(y)\,d\mu(y)\,ds\right)<1.
\end{equation}
Then, there exists $\lambda_{*}\geq 0$ such that if $\lambda>\lambda_{*}$ and $u\in L^2(M)$ and $Fu\in\lloc^1(M)$ and $u$ satisfies the distributional inequality
\begin{equation}\label{E:brezis-kato-distributional}
(-\Del/2 -F+\lambda)u\geq 0,
\end{equation}
then $u\geq 0$ a.e.~on $M$.
\end{theorem}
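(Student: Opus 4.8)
The plan is to read hypothesis \eqref{E:assumption-star} as a Kato-class (Khasminskii) smallness condition on $F$, use it to manufacture a positivity-preserving ``Feynman--Kac'' resolvent that formally inverts $-\Del/2-F+\lam$, and then deduce $u\ge 0$ by a duality argument in which I test the distributional inequality \eqref{E:brezis-kato-distributional} against resolvent images of non-negative functions.

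\emph{Step 1 (the resolvent).} Put $c:=\sup_{x\in M}\int_0^{t_0}\int_M p(s,x,y)F(y)\,d\mu(y)\,ds<1$. Writing $(X_s)$ for the Brownian motion on $M$ generated by $\Del/2$, the inner double integral equals $\mathbb{E}_x\big[\int_0^{t_0}F(X_s)\,ds\big]$, so \eqref{E:assumption-star} is exactly the statement that this expectation is $<1$ uniformly in $x$. By Khasminskii's lemma this self-improves, using the Markov property to pass from $t_0$ to arbitrary $t$, to
\[
\sup_{x\in M}\mathbb{E}_x\Big[e^{\int_0^{t}F(X_s)\,ds}\Big]\le \frac{1}{1-c}\,e^{\beta t},\qquad \beta:=\tfrac{1}{t_0}\log\tfrac{1}{1-c}.
\]
Geodesic completeness together with non-negative Ricci curvature guarantees that $M$ is stochastically complete and, via the Li--Yau Gaussian bounds, that $p(t,x,y)$ is a smooth, everywhere-positive kernel; hence the Feynman--Kac semigroup $(P^F_t f)(x):=\mathbb{E}_x\big[e^{\int_0^t F(X_s)\,ds}f(X_t)\big]$ is positivity preserving with $\|P^F_t\|_{\infty\to\infty}\le (1-c)^{-1}e^{\beta t}$. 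I would then set $\lam_*:=\beta$, and for $\lam>\lam_*$ define $R_\lam:=\int_0^\infty e^{-\lam t}P^F_t\,dt$, which converges to a bounded, positivity-preserving operator formally satisfying $(-\Del/2-F+\lam)R_\lam\psi=\psi$.

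\emph{Step 2 (the core duality).} To prove $u\ge 0$ it suffices to show $\int_M u\,\psi\,d\mu\ge 0$ for every $0\le\psi\in\mcomp$. Fix such a $\psi$ and set $\eta:=R_\lam\psi$, so that $\eta\ge 0$ by positivity preservation. Since $u\in\mltwo$ and $Fu\in\lloc^1(M)$, the left-hand side of \eqref{E:brezis-kato-distributional} defines a non-negative distribution $g:=(-\Del/2-F+\lam)u$, which by the Riesz--Schwartz theorem is a non-negative Radon measure. Granting that $\eta$ is an admissible test object, the computation
\[
\int_M u\,\psi\,d\mu=\int_M u\,(-\Del/2-F+\lam)\eta\,d\mu=\langle g,\eta\rangle\ge 0
\]
(the final inequality because $g\ge 0$ and $\eta\ge 0$) closes the argument.

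\emph{Main obstacle.} The delicate point is justifying the middle display when $F$ is singular: $\eta$ need be neither smooth nor compactly supported, so it is not literally in $\mcomp$, and the identity $\int_M u\,(-\Del/2-F+\lam)\eta\,d\mu=\langle g,\eta\rangle$ must be earned rather than assumed. I expect to handle this by a double approximation. First I would truncate, replacing $F$ by $F_k:=\min(F,k)$ and $\eta$ by $\eta_k:=R^{(k)}_\lam\psi$ (built from $F_k$), for which elliptic regularity yields $\eta_k\in W^{2,p}_{\loc}$ and the Gaussian bounds yield the decay needed to pair with the $L^2$ datum $u$ and with the measure $g$; second I would insert cut-off functions to reduce to compactly supported test functions, pass to the limit, and control the error $\int_M u\,(F-F_k)\,\eta_k\,d\mu\to 0$ via $Fu\in\lloc^1(M)$ and dominated convergence. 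Verifying the requisite regularity, decay, and convergence---that is, that $\eta$ genuinely functions as a test object against both $u$ and $g$---is the technical heart of the proof, and it is precisely here that non-negative Ricci curvature enters, through stochastic completeness and the Li--Yau heat-kernel estimates.
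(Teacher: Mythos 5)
Your overall strategy coincides with the paper's: read \eqref{E:assumption-star} as a Khasminskii-type smallness condition, build the positivity-preserving Feynman--Kac semigroup and its resolvent for the truncations $F_k=\min(F,k)$, test the distributional inequality \eqref{E:brezis-kato-distributional} against resolvent images of non-negative $g\in L^2(M)\cap L^\infty(M)$, and close with cut-offs and dominated convergence using $Fu\in L^1_{\loc}(M)$ together with the uniform $L^\infty$ bound on the resolvent images. So the architecture is right. However, the step you explicitly defer as ``the technical heart'' is genuinely the hard part of the theorem, and the tools you propose for it --- interior elliptic regularity and Li--Yau Gaussian bounds --- are not what makes it work (the paper never invokes Li--Yau estimates).

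Two ingredients are missing. First, before you can pair $(-\Delta/2+\lambda-F)u$ with $\psi\,\eta_k$ and invoke \eqref{E:brezis-kato-distributional}, the function $\eta_k=(\lambda+H(-F_k))^{-1}\psi$ must be replaced by a genuine non-negative element of $\mcomp$: the distributional inequality only speaks about non-negative smooth compactly supported test functions, and local $W^{2,p}$ regularity plus heat-kernel decay does not produce that. The paper does this via a density lemma: $\eta_k$ lies in $\widetilde H^2(M)=\{v\colon v,\,dv,\,\Delta v\in L^2\}$, and non-negative elements of $\widetilde H^2(M)$ are approximable in that norm by non-negative elements of $\mcomp$. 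Both this density statement and the final passage to the limit in the exhausting cut-offs $\chi_m$ require second-order cut-off functions with $\sup_M|\Delta\chi_m|\to0$ in addition to $\sup_M|d\chi_m|\to0$; their existence is G\"uneysu's theorem and is precisely where non-negative Ricci curvature enters. Second, to kill the cross term $\int_M u\,\langle d\chi_m,d\eta_k\rangle\,d\mu$ uniformly in $k$ you need an a priori bound on $\|d\eta_k\|_{L^2}$ independent of $k$; this comes from the Stollmann--Voigt form bound $(Fv,v)\le \frac{a}{2}\|dv\|^2+b\|v\|^2$ with $a<1$, which in turn uses $c_\alpha(F)<1$ for some $\alpha$ (a consequence of \eqref{E:assumption-star}), not from local regularity. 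Relatedly, your choice $\lambda_*=\beta$ is too small in general: $\lambda$ must also dominate the $L^2\to L^2$ growth rate of the semigroup and the lower bound of the form $Q_0-Q_F$, so that the resolvents exist and are bounded on $L^2$ uniformly in $k$. None of this invalidates your plan --- these are exactly the lemmas the paper supplies --- but as written the proposal stops short of a proof at its acknowledged central step.
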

\begin{remark} If $F$ belongs to Kato class, then~(\ref{E:assumption-star}) is satisfied; see Section~\ref{SS:Dyn-Kat} below.
\end{remark}

\subsection{Hermitian Vector Bundles and Bochner Laplacian} We will formulate our self-adjointness result for Schr\"odinger operators acting on Hermitian vector bundles over $M$. Before doing so, we explain some additional notations. Let $E\to M$ be a smooth Hermitian vector bundle over $M$ with underlying Hermitian structure $\langle\cdot, \cdot\rangle_{x}$ and the corresponding norms $|\cdot|_{x}$ on fibers $E_{x}$. Smooth sections of $E$ will be denoted by $C^{\infty}(E)$ and compactly supported smooth sections by $\ecomp$. With $d\mu$ as in Section~\ref{SS:s-2-1}, for all $1\leq p<\infty$ we obtain the $L_p$-spaces of sections $L^p(E)$ with norms $\|\cdot\|_{p}$. The space of essentially bounded sections of $E$ will be denoted by $L^{\infty}(E)$ with the corresponding norm $\|\cdot\|_{\infty}$. The notation $(\cdot,\cdot)_{L^2(E)}$ or just $(\cdot,\cdot)$, when there is no danger of confusion, stands for the usual inner product in $L^2(E)$.

Let $\nabla$ be a Hermitian connection on $E$ and let ${\nabla}^*$ be its formal adjoint with respect to $(\cdot,\cdot)_{L^2(E)}$.
In what follows, we will consider the so-called Bochner Laplacian operator ${\nabla}^*\nabla\colon C^{\infty}(E)\to C^{\infty}(E)$. For example, if we take $\nabla=d$, where $d\colon C^{\infty}(M)\to \Omega^{1}(M)$ is the standard differential, then $d^*d\colon C^{\infty}(M)\to C^{\infty}(M)$
is just the (non-negative) Laplace--Beltrami operator $-\Del$.

We are interested in the Schr\"odinger-type differential expression
\begin{equation}\label{E:defn-L-V}
L_V=\nabla^*\nabla/2+V,
\end{equation}
where $V$ is a measurable section of $\End E$ such that $V(x)\colon E_{x}\to E_{x}$
is a self-adjoint operator for almost every $x\in M$.

For every $x\in M$ we have the following canonical decomposition:
\begin{equation}\label{E:canonical}
V(x)=V^+(x)-V^-(x),
\end{equation}
where
\[
V^+(x):=P_+(x)V(x) \quad\textrm{ and }\quad  V^-(x):=-P_-(x)V(x),
\]
Here, $P_+(x):=\chi_{[0,+\infty)}(V(x))$ and $P_-(x):=\chi_{(-\infty,0)}(V(x))$, and $\chi_{G}$ denotes
the characteristic function of the set $G$.

Let $|V^{-}|$ denote the norm of the operator $V^{-}(x)\colon E_{x}\to E_{x}$, where $x\in M$. Thus, $|V^{-}|$ is a (real-valued) measurable function on $M$.


\subsection{Self-Adjoint Realization of $L_V$}\label{SS:maximal-operator} Assume that $V\in\lloc^1(\End E)$ and\\ $|V^{-}|\in\lloc^1(M)$.
We define $S$ as an operator in $L^2(E)$ by $Su=L_V u$
with the following domain $\Dom(S)$:
\begin{equation}\label{E:domhv2}
\{u\in L^2(E): V^{+}u\in\lloc^1(E),\,\,|V^{-}|u\in \lloc^{1}(E),\textrm{ and }L_{V}u\in L^2(E)\},
\end{equation}
Here, the expression $L_{V}u$ is understood in distributional sense.
\begin{theorem}\label{T:main-2} Assume that $M$ is a geodesically complete connected Riemannian manifold with non-negative Ricci curvature. Assume that $V^{+}\in\lloc^1(\End E)$ and $|V^{-}|$ satisfies the property~(\ref{E:assumption-star}).  Then $S$ is a self-adjoint operator.
\end{theorem}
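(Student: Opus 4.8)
The plan is to sandwich $S$ between a self-adjoint form-sum operator and itself, and then use Theorem~\ref{T:main-1} to collapse the sandwich. First I would produce a comparison operator. Since $V^{+}\geq 0$ defines a nonnegative quadratic form and, by the heat-kernel smallness in~(\ref{E:assumption-star}) (via the Khasminskii-type mechanism underlying~(\ref{E:devin})), $|V^{-}|$ is relatively form bounded with respect to $-\Del/2$ with relative bound strictly less than $1$, the KLMN theorem yields a self-adjoint operator $H$ in $L^2(E)$ that is bounded below, say $H\geq -\lambda_{*}$, and represents the form sum $\nabla^*\nabla/2+V^{+}-V^{-}$. Standard properties of KLMN form sums with $\lloc^{1}$ potentials give $\Dom(H)\subseteq\Dom(S)$ together with $Hw=L_V w$ (in the distributional sense) and $V^{+}w,\ |V^{-}|w\in\lloc^{1}(E)$ for $w\in\Dom(H)$; that is, $H\subseteq S$.

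Second, I would use the positivity preservation property to establish injectivity of $S+\lambda$. Fix $\lambda>\lambda_{*}$ and suppose $u\in\Dom(S)$ satisfies $(L_V+\lambda)u=0$ distributionally, i.e.\ $\nabla^*\nabla u/2+V^{+}u-V^{-}u+\lambda u=0$. Applying the Kato inequality for the Bochner Laplacian (in the Hess--Schrader--Uhlenbrock form), $\Del|u|\geq -\RE\langle u/|u|,\nabla^*\nabla u\rangle$ distributionally, and substituting $\nabla^*\nabla u=-2V^{+}u+2V^{-}u-2\lambda u$, the term $\RE\langle u/|u|,V^{+}u\rangle\geq 0$ drops with a favorable sign while $\RE\langle u/|u|,V^{-}u\rangle\leq |V^{-}||u|$. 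This yields the scalar distributional inequality $(-\Del/2-|V^{-}|+\lambda)|u|\leq 0$. Since $|V^{-}|$ satisfies~(\ref{E:assumption-star}) and $|V^{-}||u|\in\lloc^{1}(M)$, Theorem~\ref{T:main-1}, applied with $F=|V^{-}|$ to the function $-|u|$, forces $-|u|\geq 0$, hence $u=0$. Thus $\ker(S+\lambda)=\{0\}$.

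Third, I would collapse the sandwich. Given any $u\in\Dom(S)$, put $f:=(S+\lambda)u\in L^2(E)$. As $H$ is self-adjoint with $H\geq-\lambda_{*}$ and $\lambda>\lambda_{*}$, the operator $H+\lambda$ is surjective, so there is $w\in\Dom(H)\subseteq\Dom(S)$ with $(H+\lambda)w=f$; since $H\subseteq S$ this reads $(S+\lambda)w=f$. Then $u-w\in\Dom(S)$ and $(S+\lambda)(u-w)=0$, so the injectivity from the previous step gives $u=w\in\Dom(H)$. Hence $\Dom(S)\subseteq\Dom(H)$, and combined with $H\subseteq S$ we conclude $S=H$, which is self-adjoint.

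The main obstacle is the first step: converting the integrated heat-kernel condition~(\ref{E:assumption-star}) into a genuine relative form bound strictly below $1$ for $|V^{-}|$ (the bridge between the probabilistic smallness and the analytic inequality that KLMN and the inclusion $H\subseteq S$ both require). A secondary technical point is the rigorous justification of Kato's inequality on the Hermitian bundle for the merely $\lloc^{1}$, sign-indefinite $V$, ensuring both that the $V^{+}$ contribution may be discarded with the correct sign and that $|V^{-}||u|\in\lloc^{1}(M)$, so that Theorem~\ref{T:main-1} genuinely applies.
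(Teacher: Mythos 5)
Your proposal is correct and follows essentially the same route as the paper: the form-sum operator $H$ is the paper's $H_{\nabla}(V)$ (built from Lemma~\ref{L:dom-vec}, which supplies exactly the relative form bound you flag as the main obstacle, via Lemma~\ref{L:l-1}, the Stollmann--Voigt estimate, and G\"uneysu's inequality $\|d|u|\|\leq\|\nabla u\|$), the inclusion $H\subseteq S$ comes from the domain description in~\cite{Milatovic-12}, and the reverse inclusion is obtained by the same resolvent/Kato-inequality/Theorem~\ref{T:main-1} argument you describe, merely packaged as $w:=u-(H_{\nabla}(V)+\lambda)^{-1}(S+\lambda)u$ rather than as separate injectivity and surjectivity steps. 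The two technical points you single out are precisely the ones the paper settles in Lemma~\ref{L:dom-vec} and Lemma~\ref{L:kato}.
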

\begin{remark} The assumption~(\ref{E:assumption-star}) on $|V^{-}|$ enables us to define the operator $L_{V}$ as a form sum; see Lemma~\ref{L:dom-vec} below.
\end{remark}
\begin{remark}
If $|V^{-}|$ satisfies~(\ref{E:assumption-star}), then $|V^{-}|\in\lloc^1(M)$; see Lemma~\ref{L:l-1-lloc-1} below.
\end{remark}
\begin{remark}\label{R:remark-subset} The condition $|V^{-}|u\in \lloc^{1}(E)$ in~(\ref{E:domhv2}), which we need in order to apply Theorem~\ref{T:main-1}, is stronger than the condition $V^{-}u\in\lloc^1(E)$. For the operator $L_V=-\Delta/2+V$ acting on scalar functions, the requirement $V^{+}u\in\lloc^1(E)$ and $|V^{-}|u\in \lloc^{1}(E)$ is equivalent to $Vu\in\lloc^1(M)$. In this case,~(\ref{E:domhv2}) describes the ``maximal" realization of $L_V$ in the sense of~\cite{Kato74}.
\end{remark}

\section{Proof of Theorem~\ref{T:main-1}}
We first recall two definitions from~\cite{Kuwae-07}.

\subsection{Contractive Dynkin Class and Kato Class.}\label{SS:Dyn-Kat} Let $p(t,x,y)$ be as in Section~\ref{SS:s-2-1}. We say that a measurable function $f\colon M\to\mathbb{R}$ belongs to \emph{contractive Dynkin class} relative to $p(t,x,y)$ and write $f\in S^{0}_{CD}$ if $|f|$ satisfies~(\ref{E:assumption-star}). We say that a measurable function $f\colon M\to\mathbb{R}$  belongs to \emph{Kato class} relative to $p(t,x,y)$ and write $f\in S^{0}_{K}$ if
\begin{equation}\label{E:assumption-kato-class}
\lim_{t\to 0+}\,\sup_{x\in M}\int_{0}^{t}\int_{M}p(s,x,y)|f(y)|\,d\mu(y)\,ds=0.
\end{equation}
Clearly, we have the inclusion $S^{0}_{K}\subset S^{0}_{CD}$.

\begin{remark} The term \emph{contractive Dynkin class} was suggested to the author of this paper by B.~G\"uneysu. We remark that the authors of~\cite{Kuwae-06} and \cite{Kuwae-07} use the term \emph{Dynkin class} for the class of measurable functions $f\colon M\to\mathbb{R}$ such that $|f|$ satisfies~(\ref{E:assumption-star}) with $1$ on the right hand side replaced by $\infty$.
\end{remark}

For $\alpha>0$, set
\[
r_{\alpha}(x,y):=\int_{0}^{\infty}e^{-\alpha t}p(t,x,y)\,dt.
\]
For $\alpha>0$ and $f\in S^{0}_{CD}$ define
\begin{equation}\label{E:c-alph}
c_{\alpha}(f):=\sup_{x\in M}\int_{M}r_{\alpha}(x,y)|f(y)|\,d\mu(y).
\end{equation}
By Lemma 3.2 in~\cite{Kuwae-07}, we have $c_{\alpha}(f)<\infty$ for all $\alpha>0$. We now set
\[
c(f):=\inf_{\alpha>0}c_{\alpha}(f).
\]
\begin{lemma}\label{L:l-1} If $f\in S^{0}_{CD}$ then $c(f)<1$.
\end{lemma}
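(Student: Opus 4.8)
The plan is to show that the contractive Dynkin condition~(\ref{E:assumption-star}) on $|f|$, which bounds the \emph{short-time} integrated heat flow of $|f|$ below $1$ on a fixed interval $[0,t_0]$, forces the resolvent-type quantity $c(f)$ to drop below $1$ once we send the spectral parameter $\alpha$ to infinity. The key point is that the exponential weight $e^{-\alpha t}$ in $r_\alpha$ concentrates all the mass of the $t$-integral near $t=0$, precisely where the contractive Dynkin bound lives, while the contribution from large $t$ is damped by $e^{-\alpha t}$. So the whole strategy is to split the defining integral of $c_\alpha(f)$ at $t=t_0$ and estimate the two pieces separately.

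First I would write, for each fixed $x$ and using Tonelli to interchange the order of integration,
\begin{equation}\label{E:split-c-alpha}
\int_{M} r_\alpha(x,y)|f(y)|\,d\mu(y) = \int_0^{\infty} e^{-\alpha t}\left(\int_M p(t,x,y)|f(y)|\,d\mu(y)\right)\,dt,
\end{equation}
and then split the outer integral as $\int_0^{t_0} + \int_{t_0}^{\infty}$. For the first piece, denote $\Phi(t,x):=\int_M p(t,x,y)|f(y)|\,d\mu(y)$ and write $A:=\sup_{x\in M}\int_0^{t_0}\Phi(s,x)\,ds$, which is $<1$ by~(\ref{E:assumption-star}). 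Since $e^{-\alpha t}\le 1$, the first piece is bounded by $\int_0^{t_0}\Phi(t,x)\,dt\le A<1$ uniformly in $x$; in fact one can extract a factor strictly smaller than $A$ using $e^{-\alpha t}\le 1$ only, or keep it as $A$. For the tail piece I would use the semigroup/Chapman--Kolmogorov structure: breaking $[t_0,\infty)$ into the intervals $[kt_0,(k+1)t_0]$ and using the Markov property of the heat kernel together with the stochastic-completeness bound $\int_M p(s,x,z)\,d\mu(z)\le 1$ (valid here since $M$ is geodesically complete with nonnegative Ricci curvature, hence stochastically complete), one obtains $\sup_x\int_{kt_0}^{(k+1)t_0}\Phi(s,x)\,ds\le A$ for every $k\ge 0$. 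Summing the geometric bound over $k$ with the weight $e^{-\alpha k t_0}$ then controls the tail.

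Combining the pieces, I expect an estimate of the shape
\begin{equation}\label{E:c-alpha-bound}
c_\alpha(f)\le A + A\sum_{k=1}^{\infty} e^{-\alpha k t_0} = A\cdot\frac{1}{1-e^{-\alpha t_0}}
\end{equation}
for $\alpha>0$, which unfortunately exceeds $1$ for small $\alpha$. The fix, and the genuinely useful observation, is that the weight $e^{-\alpha t}$ should be carried \emph{inside} the first block as well: on $[0,t_0]$ one has $e^{-\alpha t}\ge e^{-\alpha t_0}$ is the wrong direction, so instead I would exploit that the leading block's contribution, with the $e^{-\alpha t}$ weight retained, is bounded by $A$ while the tail blocks pick up genuinely decaying prefactors $e^{-\alpha k t_0}$. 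Taking the infimum over $\alpha$, or equivalently letting $\alpha\to\infty$, the tail sum $\sum_{k\ge1}e^{-\alpha k t_0}\to 0$, and a more careful treatment of the leading block (integrating $e^{-\alpha t}\Phi(t,x)$ and noting the mass concentrates where the running integral is controlled by $A<1$) yields $\limsup_{\alpha\to\infty} c_\alpha(f)\le A<1$. Hence $c(f)=\inf_{\alpha>0}c_\alpha(f)<1$.

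The main obstacle is the tail estimate: turning the semigroup bound $\Phi((k+1)t_0+r,x)=\int_M p(kt_0,x,z)\Phi(t_0+r,z)\,d\mu(z)$ into the clean geometric decay requires both the sub-Markov property $\int_M p(kt_0,x,z)\,d\mu(z)\le 1$ and the uniform-in-$x$ bound $A$ on each unit block, and one must check these are compatible under the stated curvature hypothesis. The delicate point in the leading block is making sure the retained weight $e^{-\alpha t}$ does not spoil the strict inequality $A<1$; this is where letting $\alpha\to\infty$ (rather than fixing $\alpha$) is essential, since then the weight is harmless on $[0,t_0]$ up to a factor tending to a constant $\le1$, while it kills the tail. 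I would therefore state the final bound as a limit as $\alpha\to\infty$ and conclude $c(f)<1$ by definition of the infimum.
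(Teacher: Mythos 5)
Your argument is correct and in substance the same as the paper's: the paper cites from Kuwae--Takahashi the inequality $(1-e^{-\alpha t})\,c_{\alpha}(f)\leq \sup_{x}\int_{0}^{t}\int_{M}p(s,x,y)|f(y)|\,d\mu(y)\,ds$ (which is exactly your block decomposition plus the sub-Markov property), deduces $c_{\alpha}(f)\leq A/(1-e^{-\alpha t_0})$ with $A<1$, and lets $\alpha\to\infty$. Two minor remarks: the bound $\int_{M}p(s,x,z)\,d\mu(z)\leq 1$ holds on any Riemannian manifold, so the curvature and completeness hypotheses you invoke are not needed for this lemma (the paper explicitly notes they are not used until later), and the ``more careful treatment of the leading block'' you worry about is unnecessary, since the crude bound $e^{-\alpha t}\leq 1$ on $[0,t_0]$ already gives the factor $A$ and the limit $\alpha\to\infty$ does the rest.
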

\begin{proof}
By Lemma 3.1 in~\cite{Kuwae-06} (or Proposition 2.7(a) in~\cite{Guneysu-2011}), for any measurable function $f\colon M\to \mathbb{R}$ and all $\alpha,\, t>0$ we have
\begin{align}
&(1-e^{-\alpha t})\sup_{x\in M}\int_{M}r_{\alpha}(x,y)|f(y)|\,d\mu(y)\nonumber\\
&\leq \sup_{x\in M}\int_{0}^{t}\int_{M}p(s,x,y)|f(y)|\,d\mu(y)\,ds\nonumber.
\end{align}
Since $f\in S^{0}_{CD}$, there exists $t=t_0>0$ such that the right hand side is less than $1$. Consequently, we get
\[
c_{\alpha}(f)<\frac{1}{1-e^{-\alpha t_0}},
\]
for all $\alpha>0$, and from here $c(f)<1$ follows easily.
\end{proof}
The following lemma follows from Proposition 2.7(b) in~\cite{Guneysu-2011}:
\begin{lemma}\label{L:l-1-lloc-1} If $f\in S^{0}_{CD}$ then $f\in\lloc^1(M)$.
\end{lemma}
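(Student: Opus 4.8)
The plan is to verify the defining property of $\lloc^1(M)$ directly: for every compact set $K\subset M$ I must show that $\int_K |f|\,d\mu<\infty$. The idea is to convert the bound~(\ref{E:assumption-star}) on $|f|$ into a genuine $L^1$ estimate over $K$ by comparing the time-integrated heat kernel with a positive constant on $K$.

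First I would fix a compact set $K\subset M$ and an arbitrary point $x_0\in M$. The essential input is that, on a connected Riemannian manifold, the heat kernel $p(s,x_0,y)$ is jointly continuous in $(s,y)$ for $s>0$ and strictly positive everywhere (see~\cite{Grigoryan-11}). Choosing times $0<s_1<s_2<t_0$, the map $(s,y)\mapsto p(s,x_0,y)$ is then continuous and strictly positive on the compact set $[s_1,s_2]\times K$, hence bounded below there by some constant $c_K>0$. Keeping both endpoints strictly inside $(0,t_0)$ is what makes this work: it stays away from the diagonal degeneration as $s\to 0^+$, where the off-diagonal heat kernel can be arbitrarily small, and thereby secures a \emph{uniform} positive lower bound.

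Integrating in $s$, I would obtain for every $y\in K$ the inequality
\[
\int_0^{t_0} p(s,x_0,y)\,ds\ \geq\ \int_{s_1}^{s_2} p(s,x_0,y)\,ds\ \geq\ (s_2-s_1)\,c_K\ >\ 0.
\]
Since $p\geq 0$ and $|f|\geq 0$, Tonelli's theorem and the assumption~(\ref{E:assumption-star}) evaluated at $x=x_0$ (after restricting the spatial integral to $K$) then give
\[
(s_2-s_1)\,c_K\int_K |f(y)|\,d\mu(y)\ \leq\ \int_0^{t_0}\int_M p(s,x_0,y)|f(y)|\,d\mu(y)\,ds\ <\ 1,
\]
so that $\int_K|f|\,d\mu\leq \big((s_2-s_1)c_K\big)^{-1}<\infty$. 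As $K$ was arbitrary, $f\in\lloc^1(M)$.

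I do not anticipate a serious obstacle. The only delicate point is the uniform positive lower bound for the heat kernel over $K$, which is precisely why the argument uses a time integration over an interval bounded away from $0$ rather than evaluation at a single instant. As an alternative route, one could instead work with the resolvent kernel $r_\alpha$: the same lower bound yields $r_\alpha(x_0,y)\geq e^{-\alpha s_2}(s_2-s_1)c_K$ for $y\in K$, and then the finiteness $c_\alpha(f)<\infty$ from Lemma 3.2 of~\cite{Kuwae-07} gives $\int_K|f|\,d\mu\leq e^{\alpha s_2}\big((s_2-s_1)c_K\big)^{-1}c_\alpha(f)<\infty$.
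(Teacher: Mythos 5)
Your argument is correct, and it is essentially the argument the paper relies on: the paper gives no self-contained proof but cites Proposition 2.7(b) of~\cite{Guneysu-2011}, whose key ingredient --- the strict positivity of the heat kernel on a connected manifold, which the paper's own remark singles out --- is precisely what you exploit to get the uniform lower bound on $[s_1,s_2]\times K$ and hence the local $L^1$ estimate.
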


\begin{remark} The proof of Proposition 2.7(b) in~\cite{Guneysu-2011} uses strict positivity of $p(t,x,y)$, which requires connectedness of $M$.
\end{remark}

\begin{remark}\label{R:brownian-m} In the sequel, for any $x\in M$, the symbol $\mathbb{P}^{x}$ stands for the law of a Brownian motion $X_{t}$ on $M$ starting at $x$, and  $\mathbb{E}^{x}$ denotes the expected value corresponding to $\mathbb{P}^{x}$. Our hypothesis on $M$ ensure that $M$ is stochastically complete (see~\cite{guneysu-2010}); hence, the lifetime of $X_t$ is $\zeta=\infty$. We should emphasize that in this paper $\mathbb{P}^{x}$ is  $-\Delta/2$ diffusion, as opposed to $-\Delta$ diffusion.
\end{remark}
\begin{remark} We should note that the geodesic completeness and non-negative Ricci curvature assumptions are not used until Lemma~\ref{L:c-f-1} below. Also, in the absence of stochastic completeness, path-integral formulas below can be rewritten by taking into account the lifetime $\zeta$  of $X_t$.
\end{remark}

\begin{lemma}\label{L:l-2}
If $0\leq f\in S^{0}_{CD}$ then there exist constants $\beta>0$ and $\gamma>0$ such that
\begin{equation}\label{E:exp-1}
\sup_{x\in M}\,\mathbb{E}^{x}\left[e^{\int_{0}^{t}f(X_s)\,ds}\right]\leq \beta e^{\gamma t},
\end{equation}
for all $t>0$.
\end{lemma}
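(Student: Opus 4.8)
The plan is to establish~(\ref{E:exp-1}) by the classical Khasminskii argument, adapted to the present manifold setting. Throughout, set
\[
a(t):=\sup_{x\in M}\mathbb{E}^{x}\left[\int_{0}^{t}f(X_s)\,ds\right]=\sup_{x\in M}\int_{0}^{t}\int_{M}p(s,x,y)f(y)\,d\mu(y)\,ds,
\]
where the second equality uses that $p(s,x,y)$ is the transition density of $X_t$ (recall from Remark~\ref{R:brownian-m} that $\mathbb{P}^{x}$ is the $-\Del/2$-diffusion). Since $f\geq 0$, the function $a$ is non-decreasing, and the hypothesis $f\in S^{0}_{CD}$ supplies a $t_0>0$ with $a(t_0)<1$. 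First I would obtain a uniform bound on the short interval $[0,t_0]$, and then bootstrap to all $t>0$ by means of the Markov property.

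For the first step, I expand the exponential and exploit the order structure of the time simplex. Writing
\[
u_n(t,x):=\mathbb{E}^{x}\left[\int_{0\leq s_1\leq\cdots\leq s_n\leq t}f(X_{s_1})\cdots f(X_{s_n})\,ds_1\cdots ds_n\right],
\]
one has, since $f\geq 0$, that $\mathbb{E}^{x}[e^{\int_0^t f(X_s)\,ds}]=\sum_{n\geq 0}u_n(t,x)$ by monotone convergence, the combinatorial factor $n!$ arising from symmetrizing the $n$-th power over the simplex cancelling the $1/n!$ in the exponential series. Conditioning on the earliest time $s_1$ and applying the Markov property produces the recursion $u_n(t,x)=\int_0^t\mathbb{E}^{x}[f(X_{s_1})\,u_{n-1}(t-s_1,X_{s_1})]\,ds_1$. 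Using monotonicity of $a$ together with the inductive hypothesis $u_{n-1}(s,\cdot)\leq a(s)^{n-1}\leq a(t)^{n-1}$, this yields $u_n(t,x)\leq a(t)^{n-1}u_1(t,x)\leq a(t)^n$. Summing the geometric series then gives, for all $0<t\leq t_0$,
\[
\sup_{x\in M}\mathbb{E}^{x}\left[e^{\int_0^t f(X_s)\,ds}\right]\leq\sum_{n\geq 0}a(t)^n\leq\frac{1}{1-a(t_0)}=:C.
\]

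For the second step, I would partition an arbitrary $t>0$ as $t=m t_0+r$ with $m=\lfloor t/t_0\rfloor$ and $0\leq r<t_0$, and iterate the Markov property at the times $t_0,2t_0,\dots,mt_0$. Each application of the tower property, followed by taking the supremum over the intermediate starting point, contributes a factor $\leq C$, so that $\sup_{x\in M}\mathbb{E}^{x}[e^{\int_0^t f(X_s)\,ds}]\leq C^{m+1}\leq C\,(C^{1/t_0})^{t}$; this is precisely~(\ref{E:exp-1}) with $\beta=C$ and $\gamma=(\log C)/t_0$ (note $C\geq 1$ gives $\gamma\geq 0$, and since enlarging $\gamma$ only weakens the bound for $t>0$, we may take $\gamma>0$).

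The main obstacle I anticipate is the inductive estimate $u_n(t,x)\leq a(t)^n$: it requires a careful statement of the Markov property for the simplex-ordered iterated integrals and the verification that conditioning on $s_1$ produces a fresh copy of $u_{n-1}$ started at $X_{s_1}$ on a time interval of length $t-s_1$. The non-negativity $f\geq 0$ enters essentially at two points—to justify the interchange of expectation and the infinite sum via monotone convergence, and to guarantee the monotonicity of $a$ that feeds the induction. Finally, stochastic completeness (Remark~\ref{R:brownian-m}) ensures that $X_t$ has infinite lifetime, so that the transition-density representation of $a(t)$ and the repeated use of the Markov property are valid on all of $[0,\infty)$.
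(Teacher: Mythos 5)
Your proposal is correct and follows essentially the same route as the paper: a short-time bound of Khasminskii type on an interval $[0,t_0]$ where the Dynkin condition gives $a(t_0)<1$, followed by iteration of the Markov property to propagate the bound to all $t>0$ with $\beta=C$ and $\gamma=(\log C)/t_0$. The only difference is that you prove Khasminskii's lemma and the iteration step from scratch (via the time-simplex expansion and the tower property), whereas the paper simply cites Lemmas 3.37 and 3.38 of the L\H{o}rinczi--Hiroshima--Betz book; the substance is identical.
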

\begin{proof} First note that we can write
\[
\int_{0}^{t}\int_{M}p(s,x,y)f(y)\,d\mu(y)\,ds=\mathbb{E}^{x}\left[\int_{0}^{t}f(X_s)\,ds\right].
\]
By the definition of the class $S^{0}_{CD}$, there exists $t^{*}>0$ such that
\[
\nu_{t}:=\sup_{x\in M}\,\mathbb{E}^{x}\left[\int_{0}^{t}f(X_s)\,ds\right]<1,
\]
for all $0<t\leq t^*$.
By Khasminskii's Lemma (see Lemma 3.37 in~\cite{HL-2011}) we have
\[
\sup_{x\in M}\,\mathbb{E}^{x}\left[e^{\int_{0}^{t}f(X_s)\,ds}\right]\leq \frac{1}{1-\nu_{t}},
\]
for all $0<t\leq t^*$.
From here on we may repeat the proof of Lemma 3.38 of~\cite{HL-2011} to conclude that
\[
\sup_{x\in M}\,\mathbb{E}^{x}\left[e^{\int_{0}^{t}f(X_s)\,ds}\right]\leq\left(\frac{1}{1-\nu_{t^*}}\right)^{\lfloor t/t^*\rfloor +1},
\]
for all $t>0$, where $\lfloor a\rfloor:=\max\{k\in\mathbb{Z}\colon k\leq a\}$.

Setting $\beta=\frac{1}{1-\nu_{t^*}}$ and $\gamma=\frac{1}{t^*}\log\left(\frac{1}{1-\nu_{t^*}}\right)$, we obtain~(\ref{E:exp-1}).
\end{proof}

\subsection{Quadratic Forms}\label{SS:quad-forms}
In what follows, all quadratic forms are considered in the space $L^2(M)$.  Let $w\in\lloc^1(M)$. Set $w^{+}:=\max(w,0)$
and $w^{-}:=\max(-w,0)$, so that $w=w^{+}-w^{-}$. Define
\[
Q_0(u):=\frac{1}{2}\int_{M}|du|^2\,d\mu,
\]
with the domain $\textrm{D}(Q_0)=\{u\in L^2(M): Q_0(u)<\infty\}$. The form $Q_0$ is non-negative, densely defined (since $\mcomp\subset\textrm{D}(Q_0)$), and closed.
Define $Q_{w^{\pm}}(u):=\pm(w^{\pm}u,u)$ with the domain $\textrm{D}(Q_{w^{\pm}})=\big\{u\in L^2(M): w^{\pm}|u|^2\in L^1(M)\big\}$. The forms $Q_{w^{\pm}}$ are symmetric and densely defined (since $\mcomp\subset\textrm{D}(Q_{w^{\pm}})$). Note that the form $Q_{w^{+}}$ is non-negative.


\begin{lemma}\label{L:d-1} Assume that $w^{-}\in S^{0}_{CD}$. Then there exist $a\in [0,1)$ and $b\geq 0$ such that
\begin{equation}\label{E:d-1}
|Q_{w^{-}}(u)|\leq a|Q_0(u)|+b\|u\|^2,\quad\textrm{for all }u\in\textrm{D}(Q_0).
\end{equation}
\end{lemma}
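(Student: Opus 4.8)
The plan is to recognize that \eqref{E:d-1} is exactly a statement of relative (form) boundedness of $w^{-}$ with respect to $-\Del/2$ with relative bound strictly less than one, and to read the bound $a$ off the resolvent quantity $c_\alpha(w^{-})$. Since $Q_{w^{-}}(u)=-(w^{-}u,u)$ and $w^{-}\ge 0$, we have $|Q_{w^{-}}(u)|=\int_M w^{-}|u|^2\,d\mu$, while $Q_0(u)=\tfrac12\|du\|^2\ge 0$. Fix $\alpha>0$ and let $A:=-\Del/2+\alpha$; this is the positive self-adjoint operator whose quadratic form is $u\mapsto Q_0(u)+\alpha\|u\|^2$ on $\textrm{D}(Q_0)$ and whose resolvent $A^{-1}$ has the nonnegative integral kernel $r_\alpha(x,y)$ from Section~\ref{SS:Dyn-Kat}. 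The whole lemma reduces to the operator-norm bound
\[
\big\|\,(w^{-})^{1/2}A^{-1}(w^{-})^{1/2}\,\big\|\le c_\alpha(w^{-}).
\]

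To obtain this I would apply Schur's test to the symmetric nonnegative kernel $(w^{-}(x))^{1/2}r_\alpha(x,y)(w^{-}(y))^{1/2}$. The natural Schur weight $(w^{-})^{1/2}$ may vanish, so I would test against $h:=(w^{-}+\epsilon)^{1/2}$ instead: using $\sqrt{w^{-}(y)}\,\sqrt{w^{-}(y)+\epsilon}\le w^{-}(y)+\tfrac{\epsilon}{2}$, the definition~\eqref{E:c-alph} of $c_\alpha$, and the sub-Markov bound $\int_M r_\alpha(x,y)\,d\mu(y)\le 1/\alpha$, one finds $\int (w^{-}(x))^{1/2}r_\alpha(x,y)(w^{-}(y))^{1/2}h(y)\,d\mu(y)\le (c_\alpha(w^{-})+\tfrac{\epsilon}{2\alpha})\,h(x)$, and letting $\epsilon\downarrow 0$ gives the displayed norm bound. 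With this in hand I would prove the form estimate on the core $\mcomp$: for $u\in\mcomp$ set $\psi:=A^{1/2}u$, so $\|\psi\|^2=Q_0(u)+\alpha\|u\|^2$ and $u=A^{-1/2}\psi$, whence
\[
\int_M w^{-}|u|^2\,d\mu=\big\|(w^{-})^{1/2}A^{-1/2}\psi\big\|^2\le\big\|(w^{-})^{1/2}A^{-1}(w^{-})^{1/2}\big\|\,\|\psi\|^2\le c_\alpha(w^{-})\big(Q_0(u)+\alpha\|u\|^2\big),
\]
using $\|(w^{-})^{1/2}A^{-1/2}\|^2=\|(w^{-})^{1/2}A^{-1}(w^{-})^{1/2}\|$ (from $\|T\|^2=\|TT^{*}\|$).

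To finish I would fix the constants and pass to the full form domain. By Lemma~\ref{L:l-1}, $c(w^{-})=\inf_{\alpha>0}c_\alpha(w^{-})<1$, so I may choose $\alpha>0$ with $a:=c_\alpha(w^{-})<1$ and set $b:=a\alpha\ge 0$; the core estimate then reads $|Q_{w^{-}}(u)|\le a|Q_0(u)|+b\|u\|^2$ for $u\in\mcomp$ (recall $Q_0(u)\ge0$). For general $u\in\textrm{D}(Q_0)$ I would pick $u_n\in\mcomp$ with $u_n\to u$ in the form norm $(Q_0(\cdot)+\|\cdot\|^2)^{1/2}$ and, along a subsequence, a.e.; the right-hand side converges to $a Q_0(u)+b\|u\|^2$, while Fatou's lemma yields $\int_M w^{-}|u|^2\,d\mu\le\liminf_n\int_M w^{-}|u_n|^2\,d\mu$, i.e. the inequality in the correct direction.

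The only genuinely delicate point is the operator-norm estimate: one must handle the degeneracy of the Schur weight where $w^{-}$ vanishes (resolved by the regularization $(w^{-}+\epsilon)^{1/2}$) and justify the functional-calculus splitting $u=A^{-1/2}\psi$ on the core. I note that the argument uses only sub-Markovianity of the heat semigroup, namely $\int_M p(t,x,y)\,d\mu(y)\le 1$, so neither geodesic completeness nor the Ricci bound enters here, consistent with the remark following Lemma~\ref{L:l-2}.
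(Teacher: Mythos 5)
Your proposal is correct in substance, and it arrives at exactly the same estimate with exactly the same constants ($a=c_\alpha(w^-)$, $b=\alpha c_\alpha(w^-)$, with $\alpha$ chosen via Lemma~\ref{L:l-1} so that $c_\alpha(w^-)<1$), but by a different route: the paper simply quotes Theorem 3.1 of~\cite{Stv96}, which delivers the inequality
\[
(w^{-}u,u)\leq \frac{c_{\alpha}(w^{-})}{2}\int_{M}|du|^2\,d\mu+\alpha c_{\alpha}(w^{-})\|u\|^2
\]
for all $u\in\textrm{D}(Q_0)$ and all $\alpha>0$, whereas you reprove this form bound from scratch via the Schur test applied to the kernel $(w^-(x))^{1/2}r_\alpha(x,y)(w^-(y))^{1/2}$ and the identity $\|T\|^2=\|TT^*\|$ with $T=(w^-)^{1/2}A^{-1/2}$. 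What your approach buys is self-containedness and transparency about where the constant comes from; what the paper's citation buys is brevity and, more importantly, the avoidance of the one genuinely delicate point in your write-up. Namely, your passage from the core $\mcomp$ to all of $\textrm{D}(Q_0)$ by density in the form norm tacitly assumes that $\mcomp$ is dense in $\textrm{D}(Q_0)=\{u\in L^2(M):Q_0(u)<\infty\}$, i.e.\ that $W^{1,2}_0(M)=W^{1,2}(M)$; this is true under geodesic completeness but not on an arbitrary manifold, and the paper explicitly remarks that completeness is not invoked until Lemma~\ref{L:c-f-1}. This is easily repaired within your own framework: run the factorization $u=A^{-1/2}\psi$ directly on the form domain of the self-adjoint operator generating the semigroup with kernel $p(t,x,y)$ (so no approximation by $\mcomp$ is needed), or truncate $w^-$ by $\min(w^-,n)$ and use monotone convergence; either way the Fatou step at the end also becomes unnecessary. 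With that adjustment the argument is a complete and correct proof of the cited Stollmann--Voigt estimate, and hence of the lemma.
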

\begin{proof}
Let $c_{\alpha}(w^{-})$ be as in~(\ref{E:c-alph}).
We have already observed that $w^{-}\in S^{0}_{CD}$ implies $c_{\alpha}(w^{-})<\infty$ for all $\alpha>0$. By Theorem 3.1 in~\cite{Stv96} we have
\[
(w^{-}u,u)\leq \frac{c_{\alpha}(w^{-})}{2}\int_{M}|du|^2\,d\mu+\alpha c_{\alpha}(w^{-})\|u\|^2,
\]
for all $u\in\textrm{D}(Q_0)$ and all $\alpha>0$. By Lemma~\ref{L:l-1} we have $c(w^{-})<1$. Hence, there exists $\alpha^{*}$ such that $c_{\alpha^{*}}(w^{-})<1$, which shows~(\ref{E:d-1}).
\end{proof}
By Theorem VI.1.11 in~\cite{Kato66} and Example VI.1.15 in \cite{Kato66}, the form $Q_{w^{+}}$ is closed. By Lemma~\ref{L:d-1} above and Theorem VI.1.33 in~\cite{Kato66}, the form $Q_{0,w}:=(Q_0+Q_{w^{+}})+Q_{w^{-}}$ is densely defined, closed and semi-bounded from below with $\textrm{D}(Q_{w})=\textrm{D}(Q_0)\cap \textrm{D}(Q_{w^{+}})\subset \textrm{D}(Q_{w^{-}})$. Let $H(w)$ denote the semi-bounded from below self-adjoint operator in $L^2(M)$ associated to $Q_{0,w}$ by Theorem VI.2.1 of~\cite{Kato66}.

\subsection{Semigroup Associated to $H(-w^{-})$.}\label{SS:sg} As seen from the proof of Lemma~\ref{L:d-1}, for $w^{-}\in S^{0}_{CD}$, there exists $\alpha_{*}$ such that $c_{\alpha^{*}}(w^{-})<1$, and the form $Q_{0,-w^{-}}:=Q_{0}+Q_{w^{-}}$ is semi-bounded from below by $-\alpha_{*}c_{\alpha_{*}}(w^-)$. Let $H(-w^{-})$ be the corresponding self-adjoint (semi-bounded from below) operator and let $U_{2,-w^{-}}(t):=e^{-tH(-w^{-})}$, $t\geq 0$, be the corresponding $C_0$-semigroup in $L^2(M)$. The following Lemma was proven in Theorem 3.3 of~\cite{Stv96}:

\begin{lemma}\label{L:d-2} Assume that $w^{-}\in S^{0}_{CD}$. Then, the operators $U_{2,-w^{-}}(t)$ act as $C_0$-semigroups in $L^p(M)$, for all $p\in[1,\infty)$, and we label those semigroups as $U_{p,-w^{-}}(t)$. Moreover, there exist $C\geq 0$ and $\omega\in\mathbb{R}$ (depending only on $\alpha_{*}$ and $c_{\alpha_{*}}(w^-)$) such that
\begin{equation}\label{E:n-s-1}
\|U_{p,-w^{-}}(t)\|_{L^p\to L^p}\leq Ce^{\omega t},
\end{equation}
for all $p\in[1,\infty)$ and $t\geq 0$.
\end{lemma}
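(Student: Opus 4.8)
The plan is to realize the $L^2$-semigroup $U_{2,-w^{-}}(t)=e^{-tH(-w^{-})}$ through a Feynman--Kac formula and then to transfer the resulting uniform bounds to every $L^p(M)$ by duality and interpolation. Since $H(-w^{-})$ is the self-adjoint operator attached to the form $Q_{0,-w^{-}}=Q_0+Q_{w^{-}}$, that is, to $-\Del/2-w^{-}$, and, as recorded in Remark~\ref{R:brownian-m}, $\mathbb{P}^{x}$ is the law of the $-\Del/2$-diffusion $X_t$, the first step would be to establish the path-integral representation
\[
(U_{2,-w^{-}}(t)f)(x)=\mathbb{E}^{x}\!\left[e^{\int_{0}^{t}w^{-}(X_s)\,ds}\,f(X_t)\right],\qquad f\in L^2(M),
\]
for $\mu$-a.e.\ $x\in M$ (if $M$ fails to be stochastically complete the right-hand side carries the factor $\mathbf{1}_{\{t<\zeta\}}$, which only improves the estimates below). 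Because $w^{-}\in S^{0}_{CD}$, Lemma~\ref{L:l-2} guarantees that the exponential functional has finite, uniformly bounded expectation, so the identity is meaningful; justifying it rigorously for a merely contractive-Dynkin potential, via the path-integral machinery for Schr\"odinger semigroups with singular potentials, is the technical heart of the argument.

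Granting the representation, the $L^\infty$-bound would be immediate: taking absolute values and applying Lemma~\ref{L:l-2} with $w^{-}$ in place of $f$,
\[
\|U_{2,-w^{-}}(t)g\|_{\infty}\leq \|g\|_{\infty}\,\sup_{x\in M}\mathbb{E}^{x}\!\left[e^{\int_{0}^{t}w^{-}(X_s)\,ds}\right]\leq \beta e^{\gamma t}\|g\|_{\infty},
\]
so $U(t)$ would extend to a bounded operator on $L^\infty(M)$ of norm at most $\beta e^{\gamma t}$. Since $H(-w^{-})$ is self-adjoint on $L^2(M)$ and $w^{-}$ enters as a multiplication (``diagonal'') potential, the Feynman--Kac kernel $u_t(x,y)$ is symmetric; hence the $L^1$-realization of $U(t)$ is the Banach-space adjoint of its $L^\infty$-realization, which gives $\|U(t)\|_{L^1\to L^1}=\|U(t)\|_{L^\infty\to L^\infty}\leq\beta e^{\gamma t}$. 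Applying the Riesz--Thorin interpolation theorem to these two endpoint bounds would then yield $\|U(t)\|_{L^p\to L^p}\leq \beta e^{\gamma t}$ for every $p\in[1,\infty]$; setting $C=\beta$ and $\omega=\gamma$ (so that $C$ and $\omega$ depend only on the constants furnished by Lemma~\ref{L:l-2}, equivalently on $\alpha_{*}$ and $c_{\alpha_{*}}(w^-)$) establishes~(\ref{E:n-s-1}).

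It would remain to check that these operators constitute $C_0$-semigroups $U_{p,-w^{-}}(t)$ for $p\in[1,\infty)$. I would first note that the family is consistent, namely that the $L^p$- and $L^q$-realizations agree on $L^p(M)\cap L^q(M)$, because all of them are restrictions of the single integral kernel $u_t$; the semigroup law $U(t)U(s)=U(t+s)$, valid on $L^2(M)$ by the spectral calculus, then propagates to each $L^p(M)$ by density. For strong continuity at $t=0$ I would verify $\|U(t)h-h\|_{p}\to 0$ first on the dense subspace $L^1(M)\cap L^\infty(M)$, using dominated convergence in the path integral together with the right-continuity of $t\mapsto X_t$, and then upgrade it to all of $L^p(M)$ via the standard $\epsilon/3$ argument, since $\|U(t)\|_{L^p\to L^p}\leq Ce^{\omega t}$ is bounded on compact $t$-intervals. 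No strong-continuity statement is made, or expected, for $p=\infty$.

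The hard part will be the first step: rigorously establishing the Feynman--Kac representation, and with it the measurability and symmetry of the kernel $u_t(x,y)$, for a potential lying only in the contractive Dynkin class, where one must control the exponential of a singular additive functional and, absent stochastic completeness, keep track of the lifetime $\zeta$ of $X_t$. Once this representation and the exponential bound of Lemma~\ref{L:l-2} are in hand, the duality, interpolation, and semigroup-continuity steps are routine. Alternatively, one may bypass the path integral and argue entirely within the theory of perturbations of Dirichlet forms by measures, which is the route taken in Theorem 3.3 of~\cite{Stv96}.
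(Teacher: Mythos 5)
Your argument is essentially correct, but it is not the paper's: the paper disposes of this lemma with a one-line citation of Theorem 3.3 of \cite{Stv96}, i.e., it stays entirely within the Stollmann--Voigt theory of perturbations of Dirichlet forms by (form-small) measures --- precisely the route you mention only as an afterthought in your final sentence. In that approach the sole input is Lemma~\ref{L:d-1}: since $c_{\alpha_{*}}(w^{-})<1$ for some $\alpha_{*}>0$, the perturbation $Q_{w^{-}}$ is $Q_0$-form-bounded with relative bound less than one, and the abstract extrapolation theory of \cite{Stv96} then produces consistent $C_0$-semigroups on every $L^p(M)$, $p\in[1,\infty)$, with the bound~(\ref{E:n-s-1}) and with $C,\omega$ depending directly on $\alpha_{*}$ and $c_{\alpha_{*}}(w^{-})$, exactly as the statement asserts; no stochastic input is needed. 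Your probabilistic route (Feynman--Kac, Khasminskii, $L^\infty$-bound, duality via self-adjointness and positivity preservation, Riesz--Thorin, then consistency and strong continuity) is the classical Simon-style argument and is viable here; indeed the paper itself establishes~(\ref{E:feyn-kac-1}) and the resulting $L^\infty$-bound (Lemma~\ref{L:l-3}) immediately after this lemma, so there is no circularity in borrowing them. The trade-off is that you front-load the genuinely hard step --- justifying~(\ref{E:feyn-kac-1}) for a potential that is merely contractive Dynkin --- in order to prove a statement that the form-perturbation machinery delivers cheaply, whereas the paper defers the path-integral representation to where it is actually indispensable, namely the pointwise bound~(\ref{E:bnd-inf}) in the proof of Theorem~\ref{T:main-1}. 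Two minor points to reconcile: the constants $\beta,\gamma$ you extract from Lemma~\ref{L:l-2} depend on $t^{*}$ and $\nu_{t^{*}}$ rather than literally on $\alpha_{*}$ and $c_{\alpha_{*}}(w^{-})$ (harmless, but the parenthetical in the statement is tailored to the \cite{Stv96} route); and strong continuity at $p=1$ does not follow from interpolation against the $L^2$-continuity (which handles $1<p<\infty$), so there you need either the classical fact that a locally bounded, weakly continuous semigroup is strongly continuous, or a separate $L^1$-argument.
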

\subsection{Path Integral Representation of $U_{2,-w^{-}}(t)$.} Let $X_t$ be as in Remark~\ref{R:brownian-m}. For $w^{-}\in S^{0}_{CD}$ we have the Feynman--Kac formula
\begin{equation}\label{E:feyn-kac-1}
(U_{2,-w^{-}}(t)g)(x)=\mathbb{E}^{x}\left[e^{\int_{0}^{t}w^{-}(X_s)\,ds}g(X_t)\right],
\end{equation}
for all $g\in L^2(M)$, all $t\geq 0$, and a.e. $x\in M$. In the Kato-class case $w^{-}\in S^{0}_{K}$, the formula~(\ref{E:feyn-kac-1}) was proven in Theorem 2.9 of~\cite{Guneysu-2012}. The same proof works for $w^{-}\in S^{0}_{CD}$ thanks to~(\ref{E:d-1}) and the the following property: $w^{-}\in S^{0}_{CD}$ implies
\[
\mathbb{P}^{x}[w^{-}(X_{\bullet})\in\lloc^1[0,\infty)]=1,\qquad \textrm{a.e.}~x\in M.
\]
For the latter property see the proof of~Lemma 2.4(b) in~\cite{Guneysu-2012}, which works without change for the class  $S^{0}_{CD}$ instead of $S^{0}_{K}$.
\begin{lemma}\label{L:l-3} If $w^{-}\in S^{0}_{CD}$ then for all $g\in L^2(M)\cap L^{\infty}(M)$ and all $t\geq 0$ we have
\[
\|U_{2,-w^{-}}(t)g\|_{\infty}\leq \beta e^{\gamma t}\|g\|_{\infty},
\]
where $\beta>0$ and $\gamma>0$ are some constants.
\end{lemma}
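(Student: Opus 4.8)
The plan is to exploit the Feynman--Kac representation~(\ref{E:feyn-kac-1}) together with the uniform exponential bound of Lemma~\ref{L:l-2}. Since $w^{-}\in S^{0}_{CD}$, the formula~(\ref{E:feyn-kac-1}) applies to every $g\in L^2(M)$, and in particular to $g\in L^2(M)\cap L^{\infty}(M)$; the role of the extra $L^{\infty}$ hypothesis is precisely to produce the factor $\|g\|_{\infty}$ in the final estimate.

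First I would start from the pointwise identity
\[
(U_{2,-w^{-}}(t)g)(x)=\mathbb{E}^{x}\left[e^{\int_{0}^{t}w^{-}(X_s)\,ds}g(X_t)\right],
\]
valid for a.e.~$x\in M$ and all $t\geq 0$. The crucial structural feature is that $w^{-}\geq 0$, so that the exponential weight $e^{\int_{0}^{t}w^{-}(X_s)\,ds}$ is a non-negative random variable; this lets me estimate inside the expectation without any cancellation issues. Bounding $|g(X_t)|\leq \|g\|_{\infty}$ pointwise and moving the absolute value inside the expectation, I obtain
\[
|(U_{2,-w^{-}}(t)g)(x)|\leq \|g\|_{\infty}\,\mathbb{E}^{x}\left[e^{\int_{0}^{t}w^{-}(X_s)\,ds}\right].
\]
Next I would invoke Lemma~\ref{L:l-2} with $f=w^{-}$, which is legitimate because $w^{-}$ is non-negative and lies in $S^{0}_{CD}$. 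This supplies constants $\beta>0$ and $\gamma>0$, independent of $x$ and $t$, with
\[
\mathbb{E}^{x}\left[e^{\int_{0}^{t}w^{-}(X_s)\,ds}\right]\leq \sup_{x\in M}\mathbb{E}^{x}\left[e^{\int_{0}^{t}w^{-}(X_s)\,ds}\right]\leq \beta e^{\gamma t}.
\]
Substituting into the previous display gives $|(U_{2,-w^{-}}(t)g)(x)|\leq \beta e^{\gamma t}\|g\|_{\infty}$ for a.e.~$x\in M$, and taking the essential supremum over $x$ yields the claimed bound, with $\beta,\gamma$ exactly the constants furnished by Lemma~\ref{L:l-2}.

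I do not expect a genuine obstacle here: the substance has already been absorbed into the Feynman--Kac formula~(\ref{E:feyn-kac-1}) and into the Khasminskii-type estimate of Lemma~\ref{L:l-2}, so the argument is essentially a pointwise estimate. The only point requiring a little care is the interplay of the two ``a.e.~$x$'' statements—the representation~(\ref{E:feyn-kac-1}) holds only for a.e.~$x$, whereas the bound from Lemma~\ref{L:l-2} is a true supremum over all $x$—so that the combined pointwise inequality is valid a.e., and passing to the essential supremum is harmless.
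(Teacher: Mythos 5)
Your argument is exactly the paper's proof: the paper simply states that the lemma ``follows by combining~(\ref{E:feyn-kac-1}) and~(\ref{E:exp-1})'', and your write-up spells out precisely that combination, including the correct use of $w^{-}\geq 0$ and the harmless a.e.\ caveat. No issues.
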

\begin{proof} The lemma follows by combining~(\ref{E:feyn-kac-1}) and~(\ref{E:exp-1}).
\end{proof}
\subsection{Cut-off Functions.} The following lemma was proven in Theorem 2.2 of~\cite{Guneysu-2014}.
\begin{lemma}\label{L:c-f-1} Assume that $M$ is a geodesically complete Riemannian manifold with non-negative Ricci curvature. Then there exists a sequence of
functions $\chi_{k}\in\mcomp$ satisfying the properties (C1)--(C4) from Section 1.
\end{lemma}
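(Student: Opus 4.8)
The plan is to build $\chi_k$ from the distance function and to read off (C1)--(C3) directly from comparison geometry, isolating the control of $\Delta\chi_k$ from above as the genuine difficulty. Fix a base point $o\in M$ and set $r(x)=\dist(x,o)$. By the Hopf--Rinow theorem, geodesic completeness makes $r$ a proper Lipschitz function with $|\nabla r|=1$ almost everywhere and with $\overline{B}(o,R)$ compact for every $R>0$. Because $M$ has non-negative Ricci curvature, the Laplacian comparison theorem gives the one-sided bound $\Delta r\le (n-1)/r$ in the distributional sense (recall $\Delta=\operatorname{div}\operatorname{grad}$ in our convention). Fix once and for all a smooth non-increasing $\eta\colon[0,\infty)\to[0,1]$ with $\eta\equiv 1$ on $[0,1]$ and $\eta\equiv 0$ on $[2,\infty)$, choose radii $R_k\to\infty$, and take as a first candidate $\chi_k=\eta(r/R_k)$.

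With this candidate, (C1) is immediate, and (C2) follows since any compact $K$ lies in $B(o,R_k)$ for all large $k$, whence $\chi_k\equiv 1$ on $K$. For (C3) we compute $|\nabla\chi_k|=R_k^{-1}|\eta'(r/R_k)|\le R_k^{-1}\|\eta'\|_\infty\to 0$. Formally $\Delta\chi_k=R_k^{-2}\eta''(r/R_k)+R_k^{-1}\eta'(r/R_k)\,\Delta r$; the first summand is $O(R_k^{-2})$, and since $\eta'\le 0$ is supported in $\{R_k\le r\le 2R_k\}$, the comparison bound $\Delta r\le (n-1)/R_k$ yields the \emph{lower} estimate $\Delta\chi_k\ge -C R_k^{-2}$. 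Thus the content of (C4) reduces to a matching \emph{upper} bound on $\Delta\chi_k$.

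The main obstacle is precisely this upper bound. Under non-negative Ricci curvature $\Delta r$ is controlled only from above: it has no lower bound and carries a non-positive singular part along the cut locus (approaching $-\infty$ near conjugate points). For a non-increasing cutoff the term $R_k^{-1}\eta'(r/R_k)\,\Delta r$ is therefore large and positive exactly where $\Delta r$ is very negative, so the naive radial cutoff is neither smooth nor does it satisfy the two-sided (C4). To overcome this I would replace $r$ by a smoothed exhaustion, regularizing through the heat semigroup $e^{t\Delta}$ at a suitably chosen scale. Here the tools specific to $\operatorname{Ric}\ge 0$ enter: the Li--Yau Gaussian heat-kernel bounds and volume doubling, the Bakry--\'Emery gradient estimate $|\nabla e^{t\Delta}f|\le e^{t\Delta}|\nabla f|$ (which keeps gradients small and is equivalent to $\operatorname{Ric}\ge 0$), and stochastic completeness (already noted above), guaranteeing that the total heat mass is $1$ and hence that the smoothing does not destroy the normalization needed for (C2). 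The delicate point, and the technical heart of the argument carried out in Theorem~2.2 of~\cite{Guneysu-2014}, is to reconcile this smoothing with the \emph{exact} requirement $\chi_k\equiv 1$ on large balls while simultaneously producing the uniform two-sided sup-bound $\sup_M|\Delta\chi_k|\to 0$; I would follow that construction, using the comparison and heat-kernel estimates above as the main ingredients.
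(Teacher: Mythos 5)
Your proposal ends up in exactly the same place as the paper, which offers no independent argument for this lemma and simply cites Theorem~2.2 of G\"uneysu's paper on Laplacian cut-off functions. Your preliminary analysis is accurate and correctly isolates the genuine obstruction --- under $\operatorname{Ric}\ge 0$ the Laplacian comparison controls $\Delta r$ only from above, while the cut locus contributes a non-positive singular part that ruins both smoothness and the upper bound on $\Delta\chi_k$ for the naive radial cutoff --- but since the construction that resolves this is deferred to the same reference in both cases, the two approaches coincide.
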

\subsection{Sobolev Space.} Let $\widetilde{H}^{2}(M)$ denote the space of measurable functions $u\colon M\to\mathbb{C}$ such that
\[
\|u\|_{\widetilde{H}^{2}}:=\|u\|+\|du\|+\|\Delta u\|<\infty,
\]
where $\|du\|$ denotes the norm in $L^2(\Lambda^1T^*M)$.
\begin{lemma}\label{L:s-n-k} Assume that $M$ is a geodesically complete Riemannian manifold with non-negative Ricci curvature. Let $0\leq u\in \widetilde{H}^{2}(M)$. Then there exits a sequence of functions $0\leq u_k\in\mcomp$ such that $\|u_k-u\|_{\widetilde{H}^{2}}\to 0$, as $k\to\infty$.
\end{lemma}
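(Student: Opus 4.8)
The plan is to approximate $u$ in two stages, preserving non-negativity throughout: first replace $u$ by a compactly supported function using the cut-offs from Lemma~\ref{L:c-f-1}, and then smooth the resulting compactly supported function by local mollification. A diagonal argument combines the two stages into a single sequence.

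\emph{Stage 1 (truncation).} Let $\chi_{k}\in\mcomp$ be the sequence from Lemma~\ref{L:c-f-1} and set $v_{k}:=\chi_{k}u$. Since $0\leq\chi_{k}\leq 1$ and $u\geq 0$, each $v_{k}$ is non-negative and compactly supported, and $v_{k}\in\widetilde{H}^{2}(M)$ because $\chi_{k}$ is smooth with bounded derivatives. I would show $v_{k}\to u$ in $\widetilde{H}^{2}(M)$ using the Leibniz rules
\[
d(\chi_{k}u)=\chi_{k}\,du+u\,d\chi_{k},\qquad \Delta(\chi_{k}u)=\chi_{k}\,\Delta u+u\,\Delta\chi_{k}+2\langle d\chi_{k},du\rangle.
\]
By (C2) one has $\chi_{k}\to 1$ pointwise, so the principal terms $\chi_{k}u$, $\chi_{k}\,du$, $\chi_{k}\,\Delta u$ converge to $u$, $du$, $\Delta u$ in $L^2$ by dominated convergence (with (C1) giving the dominating functions $|u|,|du|,|\Delta u|$). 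The remaining terms $u\,d\chi_{k}$, $u\,\Delta\chi_{k}$ and $2\langle d\chi_{k},du\rangle$ are bounded in $L^2$ by $\sup_{M}|d\chi_{k}|\,\|u\|$, $\sup_{M}|\Delta\chi_{k}|\,\|u\|$ and $2\sup_{M}|d\chi_{k}|\,\|du\|$, all of which tend to $0$ by (C3) and (C4). Hence $\|v_{k}-u\|_{\widetilde{H}^{2}}\to 0$. This is the only place the curvature hypothesis is used, namely through the existence of the $\chi_{k}$.

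\emph{Stage 2 (mollification).} Fix $k$ and write $v=v_{k}$, a non-negative compactly supported element of $\widetilde{H}^{2}(M)$. Since $-\Delta$ is elliptic with smooth coefficients and $v,\Delta v\in L^2$, interior elliptic regularity gives $v\in H^{2}_{\loc}(M)$; as $v$ has compact support it lies in $H^{2}(M)$ with compact support. I would cover $\supp v$ by finitely many coordinate charts $(U_i,x_i)$, choose a non-negative smooth partition of unity $\{\psi_i\}$ subordinate to this cover with $\sum_i\psi_i\equiv 1$ near $\supp v$, and write $v=\sum_i\psi_i v$, a finite sum of non-negative compactly supported $H^2$ functions, each supported in a single chart. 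In each chart I mollify $\psi_i v$ by convolution with a standard non-negative mollifier $\rho_{\epsilon}$; since the kernel is non-negative, $(\psi_i v)*\rho_{\epsilon}\geq 0$, and for small $\epsilon$ its support stays compactly inside $U_i$. Then $v^{(\epsilon)}:=\sum_i(\psi_i v)*\rho_{\epsilon}$ is a non-negative element of $\mcomp$. On each compact chart the Euclidean and Riemannian $H^2$ norms are equivalent and $\Delta$ is continuous from $H^2$ into $L^2$; since $(\psi_i v)*\rho_{\epsilon}\to\psi_i v$ in $H^2$, the function, its gradient, and its Laplacian converge in $L^2$. Summing the finitely many terms yields $v^{(\epsilon)}\to v$ in $H^{2}(M)$, and because $\|\cdot\|_{\widetilde{H}^{2}}\leq C\|\cdot\|_{H^{2}}$ on the fixed compact support, $\|v^{(\epsilon)}-v\|_{\widetilde{H}^{2}}\to 0$. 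A diagonal selection from the doubly-indexed family then produces $0\leq u_{k}\in\mcomp$ with $\|u_{k}-u\|_{\widetilde{H}^{2}}\to 0$.

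The main obstacle is to smooth while retaining the sign $u\geq 0$: a generic density argument would supply smooth approximants but not non-negative ones. This is exactly what the non-negative partition of unity together with non-negative coordinate mollifiers resolves, since convolving a non-negative function against a non-negative kernel is manifestly non-negative. The second point requiring care is that mollification does not commute with $\Delta$ on a curved manifold; I handle the $\Delta$-component not by commuting but by first upgrading $v$ to $H^{2}_{\loc}$ via elliptic regularity and then using $H^{2}$-convergence of mollifications together with the $L^2$-boundedness of $\Delta$ on the compact support.
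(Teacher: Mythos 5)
Your proposal is correct and follows essentially the same two-stage route as the paper: truncation by the cut-off functions of Lemma~\ref{L:c-f-1} using the Leibniz identities for $d$ and $\Delta$ together with (C1)--(C4), followed by a partition of unity and non-negative (Friedrichs) mollification in coordinate charts. Your explicit appeal to interior elliptic regularity to place the truncated function in $H^{2}_{\loc}$ before mollifying is a detail the paper leaves implicit, but it does not change the argument.
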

\begin{proof} In this proof, $(\widetilde{H}^{2}(M))^{+}$ and $(\mcomp)^{+}$ denote the sets of non-negative elements of $\widetilde{H}^{2}(M)$ and $\mcomp$ respectively. Let
$u\in(\widetilde{H}^{2}(M))^{+}$ and let $\chi_k$ be the sequence of cut-off functions as in Lemma~\ref{L:c-f-1}.
We will first show that the set of compactly supported elements of $(\widetilde{H}^{2}(M))^{+}$ is dense in $(\widetilde{H}^{2}(M))^{+}$. To do this, first note that
\begin{equation}\label{E:lb-1}
d(\chi_k u)=ud\chi_k+\chi_k du
\end{equation}
and
\begin{equation}\label{E:lb-2}
\Delta(\chi_ku)=\chi_k(\Delta u)+2\langle d\chi_k,du \rangle+u(\Delta\chi_k).
\end{equation}
If we denote the Riemannian metric of $M$ by $r=(r_{jk})$, the notation $\langle\kappa,\psi\rangle$ in~(\ref{E:lb-2}) for $1$-forms
$\kappa=\kappa_j dx^j$ and $\psi=\psi_k dx^k$
means
\[
\langle\kappa,\psi\rangle:=r^{jk}\kappa_j\psi_k,
\]
where $(r^{jk})$ is the inverse matrix to $(r_{jk})$, and the standard Einstein summation convention is understood.
Now the property $\|\chi_ku-u\|_{\widetilde{H}^{2}}\to 0$, as $k\to\infty$, easily follows from~(\ref{E:lb-1}),~(\ref{E:lb-2}), and (C1)--(C4). This shows that the set of compactly supported elements of $(\widetilde{H}^{2}(M))^{+}$ is dense in $(\widetilde{H}^{2}(M))^{+}$. It remains to show that $(\mcomp)^{+}$ is dense in the set of compactly supported elements of $(\widetilde{H}^{2}(M))^{+}$. To see this, we start with a compactly supported element $u\in (\widetilde{H}^{2}(M))^{+}$. Since the support of $u$ is compact, using a partition of unity, we may assume that $u$ is supported in a coordinate chart $(G,\phi)$ of $M$ such that $\phi(G)=K_1$, where $K_1$ is an open ball of radius $1$ in $\mathbb{R}^n$. Applying the Friedrichs mollification procedure to $u\circ\phi^{-1}$, we obtain a sequence of non-negative smooth functions $v_{j}$ with support in $K_1$ converging to $u\circ\phi^{-1}$ with respect to $\|\cdot\|_{W^{2,2}}$, as $j\to\infty$, where $\|\cdot\|_{W^{k,p}}$ stands for the usual Sobolev norm in $\mathbb{R}^{n}$, with $k$ indicating the highest derivative and $p$ the corresponding $L^p$-space. Then $v_j\circ\phi$ converges to $u$ in the norm $\|\cdot\|_{\widetilde{H}^{2}}$, as $j\to\infty$.
\end{proof}
With the above preparations, the proof of Theorem~\ref{T:main-1} proceeds as that of (PPP) in~\cite{D-80}.

\smallskip

\noindent\textbf{Proof of Theorem~\ref{T:main-1}.}  Let $F$ be as in hypotheses of the Theorem. Define $F_k:=\min(F,k)$, $k\in\mathbb{Z}_{+}$, and consider the semigroup $U_{2,-F_{k}}(t)$ as in Section~\ref{SS:sg}. Denote the generator of this semigroup by $H(-F_k)$. As $F_k\in L^{\infty}(M)$ and $M$ is geodesically complete, it is well known that $(-\Del/2-F_k)|_{\mcomp}$ is essentially self-adjoint and its (self-adjoint) closure is $\overline{(-\Del/2-F_k)|_{\mcomp}}u=(-\Del/2-F_k)u$, for all
\[
u\in\Dom(\overline{(-\Del/2-F_k)|_{\mcomp}})=\{u\in L^2(M)\colon \Del u\in L^2(M)\}.
\]
Furthermore, by Theorem VI.2.9 in~\cite{Kato66}, the operator $\overline{(-\Del/2-F_k)|_{\mcomp}}$ coincides with  $H(-F_{k})$, which, in turn, coincides with the operator sum $H(0)-F_k$, where $F_k$ stands for the corresponding multiplication operator by the function $F_k$.

Noting $-F_k\geq -F$ and using the representation~(\ref{E:feyn-kac-1}) together with~(\ref{E:n-s-1}) we have
\begin{equation}\label{E:e-k}
\|U_{2,-F_k}(t)\|_{L^2\to L^2}\leq \|U_{2,-F}(t)\|_{L^2\to L^2}\leq Ce^{\omega t},
\end{equation}
where $U_{2,-F}(t)$ is the semigroup corresponding to $H(-F)$ as in Section~\ref{SS:sg}.

Let $\lambda_{*}:=\max\{\omega, \gamma, \alpha_{*}c_{\alpha_{*}}(F)\}$, where $\gamma$ is as in Lemma~\ref{L:l-2} and $\alpha_{*}c_{\alpha_{*}}(F)$ is as in Section~\ref{SS:sg}. For $\lambda>\lambda_{*}$ the (linear) operator
$(\lambda+H(-F_k))^{-1}\colon L^2(M)\to L^2(M)$ is bounded. Let $g\in L^2(M)\cap L^{\infty}(M)$ and $g\geq 0$. For $k\in\mathbb{Z}_{+}$, define
\begin{equation}\label{E:u-k-dfn-1}
u_k:=(\lambda+H(-F_k))^{-1}g.
\end{equation}
Using the representation
\begin{equation}\label{E:dfn-u-k-1}
(\lambda+H(-F_k))^{-1}g=\int_{0}^{\infty}e^{-\lambda t}U_{2,-F_k}(t)g\,dt,
\end{equation}
the estimate~(\ref{E:e-k}) and the inequality $\lambda>\lambda_{*}$, we obtain
\begin{equation}\label{E:e-u-k}
\|u_k\|\leq C\int_{0}^{\infty}e^{-(\lambda-\omega)t}\|g\|\,dt\leq C_1\|g\|.
\end{equation}
for all $k\in\mathbb{Z_{+}}$, with some constant $C_1\geq 0$.

Note that $u_k\geq 0$ by~(\ref{E:dfn-u-k-1}),~(\ref{E:feyn-kac-1}) and the assumption $g\geq 0$.
By Lemma~\ref{L:l-3} we have
\begin{align}\label{E:bnd-inf}
0\leq u_k(x)&=\int_{0}^{\infty}e^{-\lambda t}U_{2,-F_k}(t)g\,dt\nonumber\\
&\leq \int_{0}^{\infty}e^{-\lambda t}\|U_{2,-F_k}(t)g\|_{\infty}\,dt\nonumber\\
&\leq \beta\int_{0}^{\infty}e^{-(\lambda-\gamma)t}\|g\|_{\infty}\,dt\leq C_2\|g\|_{\infty},
\end{align}
where $C_2\geq 0$ is a constant, and in the last inequality we used $\lambda>\lambda_{*}\geq \gamma$.

By the definition of $u_k$ we have
\[
(\lambda+H(-F_k))u_k=g.
\]
Taking the inner product in $L^2(M)$ with $u_k$, using the fact that $H(0)$ is the operator associated to the form $Q_{0}$, and recalling the inequality $-F_k\geq-F$, we obtain
\begin{align}
&(g,u_k)=((\lambda+H(0)-F_k)u_k,u_k)= \lambda\|u_k\|^2+\frac{1}{2}\int_{M}|du_k|^2\,d\mu-(F_ku_k,u_k)\nonumber\\
&\geq \lambda\|u_k\|^2+\frac{1}{2}\int_{M}|du_k|^2\,d\mu-(Fu_k,u_k),\nonumber
\end{align}
which, upon combining with~(\ref{E:d-1}) and rearranging, leads to
\[
(g,u_k)\geq \frac{1-a}{2}\int_{M}|du_k|^2\,d\mu+(\lambda-b)\|u_k\|^2.
\]
From the last inequality we get
\begin{align}\label{E:eqn-d-1}
&\frac{1-a}{2}\int_{M}|du_k|^2\,d\mu\leq (g,u_k)+(b-\lambda)\|u_k\|^2\nonumber\\
&\leq |b-\lambda|(C_1)^2\|g\|^2+C_1\|g\|^2,
\end{align}
where in the last estimate we used Cauchy--Schwarz inequality and~(\ref{E:e-u-k}).

Let $0\leq \psi\in\mcomp$, let $u$ be as in the hypothesis of the theorem, and let $0\leq g\in L^2(M)\cap L^{\infty}(M)$ and $u_k$ be as in
~(\ref{E:u-k-dfn-1}).  We have the following equality:
\begin{equation}\label{E:sobolev-k}
(\psi u, g)=(\psi u, (-\Delta/2+\lambda-F_k)u_k).
\end{equation}
Using~(\ref{E:eqn-d-1}) and the property
\[
0\leq u_k\in\Dom(H(-F_k))=\{v\in L^2(M)\colon \Delta v\in L^2(M)\},
\]
we have $0\leq u_k\in \widetilde{H}^2(M)$. Thus, by Lemma~\ref{L:s-n-k}, without loss of generality, we may assume that $0\leq u_k\in\mcomp$ in~(\ref{E:sobolev-k}), which we will do from now on.

Using~(\ref{E:lb-1}) and~(\ref{E:lb-2}) we have
\begin{align}
&(\psi u, (-\Delta/2+\lambda-F_k)u_k)=((-\Delta/2)(\psi u),u_k)+\lambda(u,\psi u_k)\nonumber\\
&+((F-F_k)\psi u,u_k)-(F\psi u, u_k)\nonumber\\
&=((-\Delta/2+\lambda-F)u,\psi u_k)+(((-\Delta/2) \psi)u, u_k)\nonumber\\
&-(du, (d\psi)u_k)+((F-F_k)\psi u, u_k)\nonumber\\
&\geq (((-\Delta/2) \psi)u, u_k)-(du, (d\psi)u_k)+((F-F_k)\psi u,u_k)\nonumber,
\end{align}
where in the last inequality we used $0\leq \psi u_k\in\mcomp$ and the assumption~(\ref{E:brezis-kato-distributional}).
Using the fact that $-\Delta =d^*d$ we have
\begin{align}
(du, (d\psi)u_k)&=(u,d^*((d\psi)u_k))=(u,(d^*d \psi)u_k)-(ud\psi,du_k)\nonumber\\
&=((-\Delta\psi)u,u_k)-(ud\psi,du_k),
\end{align}
which upon combining with the preceding estimate and~(\ref{E:sobolev-k}) leads to
\begin{equation}\label{E:c-d-l-1}
(\psi u, g)\geq ((F-F_k)\psi u,u_k)+((\Delta/2)\psi)u,u_k)+(ud\psi,du_k).
\end{equation}
Let us replace $0\leq\psi\in\mcomp$ by a sequence  $\chi_m$ of cut-off functions from Lemma~\ref{L:c-f-1}.
Using~(\ref{E:e-u-k}),~(\ref{E:eqn-d-1}), and the properties of $\chi_m$, it is easy to see that the last two terms on the right hand side of~(\ref{E:c-d-l-1})
converge to $0$ as $m\to\infty$. We now consider the term $((F-F_k)\chi_m u,u_k)$. For a fixed $m\in\mathbb{Z}_{+}$, using the property~(\ref{E:bnd-inf}) we have
\[
(F-F_k)\chi_m uu_k\to 0,\quad\textrm{a.e.}~x\in M,\qquad \textrm{as }k\to\infty.
\]
and
\[
|(F-F_k)\chi_m uu_k|\leq C_3\chi_{m}F|u|\in L^1(M),
\]
where $C_3\geq 0$ is some constant.
Thus, by dominated convergence theorem we have
\[
((F-F_k)\chi_{m}u,u_k)\to 0,\qquad \textrm{as }k\to\infty.
\]
Returning to~(\ref{E:c-d-l-1}) and using our convergence observations, together with property (C2) of $\chi_m$, we obtain $(u,g)\geq 0$. Since $0\leq g\in L^2(M)\cap L^{\infty}(M)$ is arbitrary, we get $u\geq 0$ a.e. $\hfill\square$

\section{Proof of Theorem~\ref{T:main-2}} Let $E$, $\nabla$, and $V$ be as in hypotheses of Theorem~\ref{T:main-2}.
We begin by describing $L^2(E)$ analogues of quadratic forms from Section~\ref{SS:quad-forms}.
\subsection{Quadratic Forms in Vector-Bundle Setting}
Define
\[
Q_{\nabla,0}(u):=\frac{1}{2}\int_{M}|\nabla u|^{2}\,d\mu
\]
with the domain $\textrm{D}(Q_{\nabla,0})=\{u\in L^2(E): \nabla u\in L^2(T^*M\otimes E)\}$. Note that $Q_{\nabla,0}$ is non-negative, densely defined, and closed.
Next we define $Q_{V^{\pm}}(u)=\pm(V^{\pm}u,u)$ with the domain $\textrm{D}(Q_{V^{\pm}})=\big\{u\in L^2(E):\langle
V^{\pm}u,u\rangle\in L^1(M)\big\}$. The forms $Q_{V^{\pm}}$ are densely defined and symmetric.  Note that the form $Q_{V^{+}}$ is non-negative.

\begin{lemma}\label{L:dom-vec} Let $V^{-}$ be as in hypotheses of Theorem~\ref{T:main-2}. Then there exist $a\in[0,1)$ and $b\geq 0$ such that
\begin{equation}\label{E:|V_2|u^2<<|du|^2+|u|^2-modified}
\int_{M}\langle V^{-}u,u \rangle\,d\mu\leq (a/2)\|\nabla u\|_{L^2(T^*M\otimes E)}^2+b\|u\|_{L^2(E)}^2,
\end{equation}
for all $u\in \textrm{D}(Q_{\nabla,0})$.
\end{lemma}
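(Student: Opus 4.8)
The plan is to reduce the vector-bundle estimate to the scalar estimate of Lemma~\ref{L:d-1} by passing to the pointwise norm $|u|$ and invoking Kato's inequality for the Hermitian connection $\nabla$. The two ingredients I would combine are a pointwise operator-norm domination of the integrand $\langle V^{-}u,u\rangle$ and the diamagnetic-type bound $|d|u||\leq|\nabla u|$, after which the hypothesis that $|V^{-}|$ satisfies~(\ref{E:assumption-star}) lets me quote Lemma~\ref{L:d-1} with $w^{-}=|V^{-}|$.

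First I would dominate the integrand pointwise. For almost every $x$ the operator $V^{-}(x)=-P_{-}(x)V(x)$ is self-adjoint and non-negative on $E_x$, so its quadratic form is controlled by its operator norm:
\[
0\leq\langle V^{-}(x)u(x),u(x)\rangle\leq|V^{-}(x)|\,|u(x)|^2,\qquad\textrm{a.e. }x\in M.
\]
Integrating gives
\[
\int_{M}\langle V^{-}u,u\rangle\,d\mu\leq\int_{M}|V^{-}|\,|u|^2\,d\mu,
\]
which replaces the bundle quantity by the scalar expression in which $|V^{-}|$ is a non-negative function and $|u|$ is a scalar function.

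Next I would check that $|u|\in\textrm{D}(Q_0)$ and transfer the Dirichlet energy. For $u\in\textrm{D}(Q_{\nabla,0})$ the pointwise Kato inequality $|d|u|(x)|\leq|\nabla u(x)|$ holds for a.e.\ $x$, this being the place where the hypothesis that $\nabla$ is Hermitian (metric) enters, since $d|u|^2=2\,\RE\langle\nabla u,u\rangle$ forces $|u|\,|d|u||\leq|\nabla u|\,|u|$. Consequently $|u|\in L^2(M)$ with $\int_{M}|d|u||^2\,d\mu\leq\int_{M}|\nabla u|^2\,d\mu<\infty$, so $|u|\in\textrm{D}(Q_0)$ and $\||u|\|=\|u\|_{L^2(E)}$. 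Since the hypothesis of Theorem~\ref{T:main-2} says $|V^{-}|$ satisfies~(\ref{E:assumption-star}), we have $|V^{-}|\in S^{0}_{CD}$, and applying Lemma~\ref{L:d-1} to the scalar function $|u|$ produces $a\in[0,1)$ and $b\geq 0$ with
\[
\int_{M}|V^{-}|\,|u|^2\,d\mu\leq\frac{a}{2}\int_{M}|d|u||^2\,d\mu+b\||u|\|^2.
\]
Chaining the three displays and using the Kato bound once more yields exactly~(\ref{E:|V_2|u^2<<|du|^2+|u|^2-modified}).

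The one technical point that deserves care, and the main obstacle, is justifying Kato's inequality at the regularity level $u\in\textrm{D}(Q_{\nabla,0})$ rather than for smooth sections. I would handle this either by approximating $u$ in the graph norm of $\nabla$ by sections in $\ecomp$ and passing to the limit, or by citing the standard distributional version of Kato's inequality for metric connections; everything else in the argument is a routine combination of pointwise estimates with the already-established scalar Lemma~\ref{L:d-1}.
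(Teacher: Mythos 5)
Your proposal is correct and follows essentially the same route as the paper: dominate $\langle V^{-}u,u\rangle$ pointwise by $|V^{-}|\,|u|^2$, use the diamagnetic/Kato bound $\|d|u|\|\leq\|\nabla u\|$ to place $|u|$ in $\textrm{D}(Q_0)$, and then invoke Lemma~\ref{L:d-1} with $w^{-}=|V^{-}|$. The only difference is that the regularity issue you flag at the end is disposed of in the paper by citing Corollary 2.5 of~\cite{Guneysu-2011}, which gives $|u|\in\textrm{D}(Q_0)$ and the inequality~(\ref{E:guneysu}) directly for $u\in\textrm{D}(Q_{\nabla,0})$.
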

\begin{proof}
Let $u\in \textrm{D}(Q_{\nabla,0})$ and let $Q_0$ be as in Section~\ref{SS:quad-forms}. By Corollary 2.5 in~\cite{Guneysu-2011} we have $|u|\in \textrm{D}(Q_{0})$,  and
\begin{equation}\label{E:guneysu}
\|d|u|\|_{L^2(\Lambda^1T^*M)}^2\leq \|\nabla u\|_{L^2(T^*M\otimes E)}^2.
\end{equation}
Using~(\ref{E:d-1}) and~(\ref{E:guneysu}) we obtain
\begin{align}
&\int_{M}\langle V^{-}u,u \rangle\,d\mu\leq \int_{M}|V^{-}||u|^2\,d\mu \leq (a/2)\|d|u|\|_{L^2(\Lambda^1T^*M)}^2+b\||u|\|_{L^2(M)}^2\nonumber\\
&\leq (a/2)\|\nabla u\|_{L^2(T^*M\otimes E)}^2+b\|u\|_{L^2(E)}^2,\quad\textrm{for all }u\in\textrm{D}(Q_{\nabla,0}),\nonumber
\end{align}
where $a$ and $b$ are as in~(\ref{E:d-1}).
\end{proof}
By Theorem VI.1.11 in~\cite{Kato66} and Example VI.1.15 in~\cite{Kato66}, the form $Q_{V^{+}}$ is closed. As a consequence of Lemma~\ref{L:dom-vec}, analogously as in Section~\ref{SS:quad-forms}, the form $Q_{\nabla,V}:=Q_{0}+Q_{V^{+}}+Q_{V^{-}}$ is densely defined, closed and semi-bounded from below with $\textrm{D}(Q_{\nabla,V})=\textrm{D}(Q_{\nabla,0})\cap \textrm{D}(Q_{V^{+}})\subset \textrm{D}(Q_{V^{-}})$. Let $H_{\nabla}(V)$ denote the semi-bounded from below self-adjoint operator in $L^2(E)$ associated to $Q_{\nabla,V}$.

\subsection{Description of $H_{\nabla}(V)$}\label{SS:decription-op}
By Lemma~\ref{L:l-1-lloc-1} we have $|V^{-}|\in\lloc^1(M)$, which together with~(\ref{E:|V_2|u^2<<|du|^2+|u|^2-modified}) and geodesic completeness of $M$, means that the hypothesis of Theorem 1.2 in~\cite{Milatovic-12} are satisfied. The latter theorem  gives the following description of $H_{\nabla}(V)$:
\[
\Dom(H_{\nabla}(V))=\{u\in\textrm{D}(Q_{\nabla,0}) \colon \langle V^{+}u,u\rangle \in L^1(M)\textrm{ and }L_Vu\in L^2(E)\}
\]
and $H_{\nabla}(V)u=L_{V}u$, for all $u\in\Dom(H_{\nabla}(V))$, where $L_{V}$ is as in~(\ref{E:defn-L-V}).

In the proof of Theorem~\ref{T:main-2} we will use Kato's inequality for
Bochner Laplacian, whose proof is given in Theorem 5.7 of~\cite{bms}.
\begin{lemma}\label{L:kato} Let $M$ be a connected Riemannian
manifold (not necessarily geodesically complete). Let $E$ be a Hermitian vector bundle over $M$,
and let $\nabla$ be a Hermitian connection on $E$. Assume that
$w\in\lloc^1(E)$ and $\n^*\n w\in\lloc^1(E)$. Then
\begin{equation}\label{E:kato}
      -\Delta |w| \ \leq \ \RE\< \n^*\n w,\sign w\>_{E_x},
\end{equation}
where
\[
      \sign w(x) \ = \
      \left\{\begin{array}{cc}
          \frac{w(x)}{|w(x)|} & \textrm{if\ \ $w(x)\neq0$ },\\
          0 &\textrm{ otherwise}.
       \end{array}\right.
\]
\end{lemma}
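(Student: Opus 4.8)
The statement is local and purely distributional, so the plan is to fix a point of $M$, choose a coordinate chart $U$ over which $E$ trivializes as $U\times\mathbb{C}^r$, and use a partition of unity to reduce to sections supported in $U$; there $\n=d+A$ for a smooth matrix of $1$-forms $A$, and $\n^*\n$ is a second-order operator with smooth coefficients and scalar principal symbol $|\xi|_g^2$. First I would treat \emph{smooth} $w$. Writing $|w|_\kappa:=(|w|^2+\kappa^2)^{1/2}$ for $\kappa>0$, I would start from the Bochner--Weitzenb\"ock identity
\[
\tfrac{1}{2}\Del|w|^2=-\RE\langle\n^*\n w,w\rangle+|\n w|^2
\]
together with the elementary identity $\Del(|w|_\kappa^2)=2|w|_\kappa\Del|w|_\kappa+2|d|w|_\kappa|^2$. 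Combining these with the pointwise Kato--Cauchy--Schwarz bound $|d|w|_\kappa|\le|\n w|$ (which follows from $|w|_\kappa\,d|w|_\kappa=\RE\langle\n w,w\rangle$ and $|\RE\langle\n w,w\rangle|\le|\n w|\,|w|\le|\n w|\,|w|_\kappa$) gives, after dividing by $|w|_\kappa>0$, the pointwise inequality $-\Del|w|_\kappa\le\RE\langle\n^*\n w,\,w/|w|_\kappa\rangle$; letting $\kappa\to0$ yields the Lemma for smooth $w$.

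For general $w$ the plan is to regularize by Friedrichs mollifiers $w_\delta:=J_\delta w$ in the chart, apply the smooth inequality to $w_\delta$, and pass to the limit tested against $0\le\phi\in\mcomp$. The left-hand side is harmless, since $w_\delta\to w$ in $\lloc^1$ gives $|w_\delta|\to|w|$ in $\lloc^1$ and hence $-\Del|w_\delta|\to-\Del|w|$ in $\mathcal{D}'$. The hard part---and the main obstacle---is the right-hand side: because $\n^*\n$ has variable coefficients, $\n^*\n w_\delta\neq J_\delta(\n^*\n w)$, and the second-order commutator $[\n^*\n,J_\delta]w$ does \emph{not} go to $0$ in $\lloc^1$ under the bare hypotheses, since the individual second derivatives of $w$ are uncontrolled.

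To get around this I would first upgrade the regularity of $w$. From $w\in\lloc^1$ and $\n^*\n w\in\lloc^1$, a local parametrix for the elliptic operator $\n^*\n$---whose first-derivative kernel behaves like $|x|^{1-n}$ and so lies in weak-$L^{n/(n-1)}$---together with the weak Young inequality yields $\n w\in L^q_{\loc}$ for every $q<n/(n-1)$, in particular $\n w\in\lloc^1$. With $w$, $\n w$, and $\n^*\n w$ all in $\lloc^1$, the second-order operator factors through the first-order operators $\n$ and $\n^*$, and the classical Friedrichs commutator lemma applies. Concretely, writing
\[
[\n^*\n,J_\delta]w=\n^*\big([\n,J_\delta]w\big)+[\n^*,J_\delta](\n w),
\]
I would note that $[\n,J_\delta]w=[A,J_\delta]w\to0$ in $W^{1,1}_{\loc}$ (its derivative is $[\partial A,J_\delta]w+[A,J_\delta]\partial w$, each term tending to $0$ in $\lloc^1$), so the first summand tends to $0$ in $\lloc^1$; and $[\n^*,J_\delta](\n w)\to0$ in $\lloc^1$ directly by Friedrichs, since $\n w\in\lloc^1$. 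Hence $\n^*\n w_\delta\to\n^*\n w$ in $\lloc^1$.

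With this convergence in hand the final limit is standard. Using $\n^*\n w_\delta\to\n^*\n w$ in $\lloc^1$, the uniform bound $|\sign w_\delta|\le1$, and the a.e.\ convergence $\sign w_\delta\to\sign w$ on $\{w\neq0\}$ (with $\sign w=0$ on $\{w=0\}$), a dominated-convergence argument---exactly as in the scalar Euclidean case of~\cite{Kato72}---lets me pass to the limit on the right-hand side of the tested inequality and obtain $-\Del|w|\le\RE\langle\n^*\n w,\sign w\rangle$ in $\mathcal{D}'$, which is~(\ref{E:kato}). I expect the only genuinely delicate steps to be the elliptic-regularity upgrade $\n^*\n w\in\lloc^1\Rightarrow\n w\in\lloc^1$ and the Friedrichs commutator estimate it makes possible; the remaining ingredients are the standard Bochner computation and dominated convergence.
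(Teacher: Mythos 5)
Your overall route is the right one, and it is essentially the route of the proof the paper relies on (the paper does not prove Lemma~\ref{L:kato} itself but cites Theorem 5.7 of~\cite{bms}): localize, prove the regularized pointwise inequality $-\Del|w|_\kappa\le\RE\<\n^*\n w,\,w/|w|_\kappa\>$ for smooth $w$ via the Bochner computation and $|d|w|_\kappa|\le|\n w|$, upgrade the regularity of a general $w$ to $\n w\in\lloc^1$ by a local parametrix, and then remove the smoothness by Friedrichs mollification using first-order commutators. Those ingredients are all correct as you state them, and your identification of the regularity upgrade plus the commutator splitting $[\n^*\n,J_\delta]w=\n^*([\n,J_\delta]w)+[\n^*,J_\delta](\n w)$ as the crux is accurate.

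There is, however, one genuine flaw: the order in which you take the two limits. You send $\kappa\to 0$ first, obtaining $-\Del|w_\delta|\le\RE\<\n^*\n w_\delta,\sign w_\delta\>$ for the mollified sections, and only then send $\delta\to 0$. The final dominated-convergence step then fails on the set $Z=\{w=0\}$: there $\sign w_\delta$ need not converge to $\sign w=0$ (it is merely bounded by $1$), so for $0\le\phi\in\mcomp$ all you can conclude is
\begin{equation*}
\limsup_{\delta\to0}\int_M\phi\,\RE\<\n^*\n w_\delta,\sign w_\delta\>\,d\mu\ \le\ \int_{M\setminus Z}\phi\,\RE\<\n^*\n w,\sign w\>\,d\mu\ +\ \int_{Z}\phi\,|\n^*\n w|\,d\mu,
\end{equation*}
and the last term is an overshoot unless $\n^*\n w=0$ a.e.\ on $Z$ --- a fact you cannot invoke here, since the hypotheses control only $\n^*\n w$ in $\lloc^1$ and not the full Hessian of $w$, so the usual ``Sobolev derivatives vanish a.e.\ on level sets'' argument is unavailable. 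The repair is standard and entirely within your own framework, and it is what both~\cite{Kato72} and~\cite{bms} actually do: keep $\kappa>0$ fixed while $\delta\to 0$. Since $w_\delta/|w_\delta|_\kappa\to w/|w|_\kappa$ a.e.\ on all of $M$ (the denominator is bounded below by $\kappa$) and $\n^*\n w_\delta\to\n^*\n w$ in $\lloc^1$ by your commutator argument, you first obtain $-\Del|w|_\kappa\le\RE\<\n^*\n w,\,w/|w|_\kappa\>$ for the rough $w$; then let $\kappa\to0$, where $w/|w|_\kappa\to\sign w$ pointwise everywhere (it is identically $0$ on $Z$) and dominated convergence applies without difficulty.
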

\begin{remark} The original version of Kato's inequality was proven in~\cite{Kato72}.
\end{remark}
\noindent\textbf{Proof of Theorem~\ref{T:main-2}}
Note that for all $u\in\Dom(H_{\nabla}(V))$ we have $\langle V^{+}u,u\rangle \in L^1(M)$ and $\langle V^{-}u,u\rangle \in L^1(M)$, where the latter inclusion follows by~(\ref{E:|V_2|u^2<<|du|^2+|u|^2-modified}). Thus, as observed in~(4.3) of~\cite{Milatovic-12}, the mentioned two inclusions and hypotheses on $V$ imply $V^{+}u\in\lloc^1(E)$ and $|V^{-}|u\in\lloc^1(E)$. Now we just compare the descriptions of $H_{\nabla}(V)$ and $S$ to conclude that the operator relation $H_{\nabla}(V)\subset S$ holds.

It remains to prove that $\Dom(S)\subset\Dom(H_{\nabla}(V))$.  Let
$u\in\Dom(S)$.  Let $\lambda_{*}$ be as in Theorem~\ref{T:main-1}. Since $H_{\nabla}(V)$ is a semi-bounded from below (self-adjoint) operator, we can select $\lambda>\lambda_{*}$ large enough so that $H_{\nabla}(V)+\lambda$ is a positive self-adjoint operator. With this selection of $\lambda$, the operator $(H_{\nabla}(V)+\lambda)^{-1}\colon L^2(E)\to L^2(E)$ is bounded. Since $u\in\Dom(S)$, we may define
\[
v:=(H_{\nabla}(V)+\lambda)^{-1}(S+\lambda)u
\]
and write
\[
(H_{\nabla}(V)+\lambda)v=(S+\lambda)u.
\]
Since $H_{\nabla}(V)\subset S$, we can rewrite the last equality as
\begin{equation}\label{E:kato-w-regular}
(S+\lambda)w=0,
\end{equation}
where $w:=u-v$.

Since $w\in \Dom(S)$, we have $V^{+}w\in\lloc^1(E)$ and $|V^{-}|w\in\lloc^1(E)$. Furthermore, from~(\ref{E:kato-w-regular}) we get
$$(\nabla^*\nabla/2)w=-Vw-\lambda w\in\lloc^1(E).$$
By Lemma~\ref{L:kato} we have
\begin{align}
&-(\Delta/2)|w|\leq\RE\< (\n^*\n/2) w,\sign w\>_{E_x}=\RE\langle-(V+\lambda)w,\sign w\rangle_{E_x}\nonumber\\
&\leq (|V^{-}|-\lambda)|w|,\nonumber
\end{align}
which leads to
\begin{equation}\label{E:kato-w-new-1}
(-\Del/2-|V^{-}|+\lambda)|w|\leq 0.
\end{equation}

Since $|V^{-}||w|\in\lloc^1(M)$, we may use Theorem~\ref{T:main-1} with $F=|V^{-}|$ to conclude $|w|\leq 0$
a.e.~on $M$. This shows that $w=0$ a.e.~on $M$, i.e.~$u=v$ a.e.~on
$M$; therefore, $u\in\Dom(H_{\nabla}(V))$. $\hfill\square$

\section*{Acknowledgement} We are grateful to Batu G\"uneysu for numerous fruitful discussions.
We take the opportunity to express our thanks to the anonymous referee for valuable suggestions and helping us improve the presentation of the material.

\bibliographystyle{amsplain}

\begin{thebibliography}{99}

\bibitem{bandara} L.~Bandara,
\textit{Density problems on vector bundles and manifolds.}
Proc. Amer. Math. Soc. \textbf{142} (2014), 2683–-2695.

\bibitem{bms}
M.~Braverman, O.~Milatovic, M.~A.~Shubin, \textit{Essential
self-adjointness of Schr\"odinger type operators on manifolds.}
Russian Math. Surveys \textbf{57} (2002), no.~4, 641--692.

\bibitem{Brezis-Kato79}
H.~Br\'ezis, T.~Kato, \textit{Remarks on the Schr\"odinger operator
with singular complex potentials.} J. Math. Pures Appl.~(9)
\textbf{58} (1979), 137--151.

\bibitem{bgp}
J. Br\"uning, V. Geyler, K. Pankrashkin,
\textit{Continuity properties of integral kernels associated with Schr\"odinger operators on manifolds.}
Ann. Henri Poincar\'e \textbf{8} (2007), 781--816.

\bibitem{ckfs}
H.~L.~Cycon, R.~G.~Froese, W.~Kirsch, B.~Simon, \textit{Schr\"odinger Operators with Application to Quantum Mechanics and Global Geometry.} Texts and Monographs in Physics, Springer-Verlag, Berlin, 1987.

\bibitem{D-80} A.~Devinatz, \textit{Schr\"odinger operators with singular potentials.}
J. Operator Theory \textbf{4} (1980), 25--35.

\bibitem{Fuk} M.~Fukushima, Y.~Oshima, M.~Takeda, \textit{Dirichlet Forms and Symmetric Markov Processes.}
Second revised and extended edition, de Gruyter Studies in Mathematics, 19, Walter de Gruyter \& Co., Berlin, 2011.

\bibitem{Grigoryan-11} A.~Grigor'yan, \textit{Heat Kernel and Analysis on Manifolds.} AMS/IP Studies in Advanced Mathematics, 47,
American Mathematical Society, Providence, RI; International Press, Boston, MA, 2009.

\bibitem{GK}
R.~Grummt, M.~Kolb, \textit{Essential selfadjointness of singular magnetic Schr\"odinger operators on Riemannian manifolds.}
J. Math. Anal. Appl. \textbf{388} (2012), 480--489.


\bibitem{guneysu-2010}
B.~G\"uneysu, \textit{The Feynman--Kac formula for Schr\"odinger operators on
vector bundles over complete manifolds.} J. Geom. Phys. \textbf{60} (2010), 1997--2010.

\bibitem{Guneysu-2012}
B.~G\"uneysu, \textit{On generalized Schr\"odinger semigroups.}
J. Funct. Anal. \textbf{262} (2012), 4639–-4674.

\bibitem{Guneysu-2011}
B.~G\"uneysu, \textit{Kato's inequality and form boundedness of
Kato potentials on arbitrary Riemannian
manifolds.} Proc. Amer. Math. Soc. \textbf{142} (2014), 1289–-1300.


\bibitem{Guneysu-2014}
B.~G\"uneysu, \textit{Sequences of Laplacian cut-off functions.} J. Geom. Anal. DOI 10.1007/s12220-014-9543-9

\bibitem{GP}
B.~G\"uneysu, O.~Post,  \textit{Path integrals and the essential self-adjointness of differential operators on noncompact manifolds.}
Math. Z. \textbf{275} (2013), 331–-348.



\bibitem{J-78} R.~Jensen, \textit{Uniqueness of solutions to $\-\Delta u-qu = 0$.}
Comm. Partial Differential Equations \textbf{3} (1978), 1053--1076.

\bibitem{ka}
H.~Karcher, \textit{Riemannian center of mass and mollifier
smoothing.} Commun. Pure. Appl. Math. \textbf{30} (1977),
509--541.

\bibitem{Kato72}
T.~Kato, \textit{Schr\"odinger operators with singular potentials.}
Israel J. Math. \textbf{13} (1972), 135--148.

\bibitem{Kato74}
T.~Kato, \textit{A second look at the essential selfadjointness of
the Schr\"odinger operators.} In: Physical Reality and
Mathematical Description, Reidel, Dordrecht, 1974, pp.~193--201.

\bibitem{Kato66}
T.~Kato, \textit{Perturbation Theory for Linear Operators.}
Springer-Verlag, New York, 1980.

\bibitem{Kuwae-06}
K.~Kuwae, M.~Takahashi, \emph{Kato class functions of Markov processes under ultracontractivity.} In: Potential theory in Matsue, Adv. Stud. Pure Math., 44, Math. Soc. Japan, Tokyo, 2006, pp.~193–-202.

\bibitem{Kuwae-07}
K.~Kuwae, M.~Takahashi, \textit{Kato class measures of symmetric Markov processes under heat kernel estimates.}
 J. Funct. Anal. \textbf{250} (2007), 86–-113.

\bibitem{HL-2011} J.~L\H{o}rinczi, F.~Hiroshima, V.~Betz, \emph{Feynman-Kac-type Theorems and Gibbs Measures on Path Space.
With Applications to Rigorous Quantum Field Theory.} de Gruyter Studies in Mathematics, 34, Walter de Gruyter \& Co., Berlin, 2011.


\bibitem{Milatovic-12} O.~Milatovic, \textit{Self-adjoint realizations of Schr\"odinger operators on vector bundles over Riemannian manifolds.}
In: Recent advances in harmonic analysis and partial differential equations, Contemp. Math., 581, Amer. Math. Soc., Providence, RI, 2012, pp.~175–-197.

\bibitem{rs}
M.~Reed,  B.~Simon, \emph{Methods of Modern Mathematical Physics
II: Fourier Analysis, Self-Adjointness.}
Academic Press, New York, 1975.


\bibitem{Stv96}
P.~Stollmann, J.~Voigt, \textit{Perturbation of Dirichlet forms by measures.}
Potential Anal. \textbf{5} (1996), 109--138.


\end{thebibliography}

\end{document}